\newcommand{\A}{\mathcal{A}}
\newcommand{\dw}{\mathrm{d}_{L^2}}
\newcommand{\ds}{\mathrm{d}_\Sigma}
\newcommand{\Ec}{\mathcal{E}}
\newcommand{\db}{\mathrm{d}_{\bullet}}
\newcommand{\dr}{\mathrm{d}}
\newcommand{\wb}{{\omega}_{\bullet}}
\newcommand{\dd}{\,d}
\def\R{{\mathbb R}}% real numbers
\def\N{{\mathbb N}}% nonnegative integers
\def\le{\leqslant}% lessoreqal
\def\ge{\geqslant}%greaterorequal
\newcommand{\re}{\mathrm{Re}}
\newcommand{\im}{\mathrm{Im}}
\theoremstyle{plain}
\newtheorem{theorem}{Theorem}[section]
\newtheorem{lemma}[theorem]{Lemma}
\newtheorem{corollary}[theorem]{Corollary}
\newtheorem{proposition}[theorem]{Proposition}
\theoremstyle{definition}
\newtheorem{definition}[theorem]{Definition}
\newtheorem{remark}[theorem]{Remark}
\newtheorem*{remark*}{Remark}
\numberwithin{equation}{section}
\begin{document}

%%%%%%%%%%%%%%%%%%%%%%%%%%%%%%%%%%%%%%%%%%%%%%%%%%%%

\title[Global attractor in a model for rotating BEC]
{Global attractor for a Ginzburg-Landau type model of rotating Bose-Einstein condensates}

\author[A.\ Cheskidov]{Alexey Cheskidov}

\address[A.\ Cheskidov]{Department of Mathematics, Statistics, and Computer Science\\
University of Illinois at Chicago\\
322 Science and Engineering Offices (M/C 249)\\
851 South Morgan Street\\
Chicago, Illinois 60607\\ United States}
\email{acheskid@math.uic.edu}

\author[D.\ Marahrens]{Daniel Marahrens}

\address[D.\ Marahrens]{Max-Planck-Institute for Mathematics in the Sciences\\
Inselstrasse 22, 04103 Leipzig\\ Germany}
\email{daniel.marahrens@mis.mpg.de}

\author[C.\ Sparber]{Christof Sparber}

\address[C.\ Sparber]{Department of Mathematics, Statistics, and Computer Science\\
University of Illinois at Chicago\\
322 Science and Engineering Offices (M/C 249)\\
851 South Morgan Street\\
Chicago, Illinois 60607\\ United States}
\email{sparber@uic.edu}

\begin{abstract}
We study the long time behavior of solutions to a nonlinear partial differential equation arising in the 
description of trapped rotating Bose-Einstein condensates. 
The equation can be seen as a hybrid between the well-known nonlinear Schr\"odinger/Gross-Pitaevskii equation and the 
Ginzburg-Landau equation. We prove existence and uniqueness of global in-time solutions in the physical energy space 
and establish the existence of a global attractor within the associated dynamics. 
We also obtain basic structural properties of the attractor and an estimate on its Hausdorff and fractal dimensions. 
As a by-product, we establish heat-kernel estimates on the linear part of the equation.
\end{abstract}

\date{\today}

\subjclass[2000]{35Q55, 35A01}
\keywords{Gross-Pitaevskii equation, Bose-Einstein condensation, Ginzburg-Landau equation, vortices, global attractor}

\thanks{This publication is based on work supported by the NSF through grant nos. DMS-1161580 and DMS-1348092}
\maketitle

%%%%%%%%%%%%%%%%%%%%%%%%%%%%%%%%%%%%%%%%%%%%%%%%%%%%%%%

\section{Introduction}
\label{sec:intro}

\subsection{Physical motivation} The study of quantized vortex dynamics in Bose-Einstein condensates (BECs)
is a topic of intense experimental and theoretical investigations.  A particular interesting situation is created when the BEC is stirred through an
external {\it rotating} confinement potential. Indeed, if the rotation speed exceeds some critical value 
{\it vortices} and, more generally, {\it vortex lattices} are being created, see, e.g., \cite{A, CPRY} for a broader introduction. 

From a mathematical point of view, rotating BECs can be described within the realm of a mean-field model, the so-called 
{\it Gross-Pitaevskii equation} \cite{PiSt}. In the following, we shall assume, without loss of generality, that the system rotates around the $z$-axis with a 
given speed $\Omega \in \R$. Placing ourselves in the associated rotating reference frame, 
the corresponding mathematical model is a {\it nonlinear 
Schr\"odinger equation} (NLS) given by
\begin{equation}
\label{NLS_rot}
i \partial_t \psi = -\frac{1}{2} \Delta \psi + \lambda |\psi|^{2} \psi + V(x) \psi - \Omega L \psi .
\end{equation}
Here, $t\in \R$, $x\in \R^d$ with $d=3$, or $d=2$, respectively. The latter corresponds to the assumption of homogeneity of the BEC 
along the $z$-axis (see, e.g., \cite{BMSW, MeSp}, for a rigorous scaling limit from three to effective two-dimensional models for BEC). The parameter $\lambda \ge 0$ describes 
the strength of the inter-particle interaction, which in this work is assumed to be repulsive.
The potential $V$ describes the magnetic trap and is usually taken in the form of a harmonic oscillator, i.e.
\begin{equation}
\label{Vquadr}
V(x)=\frac{1}{2} \omega^2 |x|^2, \quad \omega \in \R.
\end{equation}
Here, and in the following, we choose $V$ to be rotationally symmetric for simplicity. All our results can be easily generalized to the case of 
an anisotropic harmonic oscillator. 
Finally, $ \Omega L\psi$ describes the rotation around the $z$-axis, where 
\begin{equation}\label{eq:angular_momentum}
 L \psi:= - i (x_1 \partial_{x_2} \psi - x_2 \partial_{x_1}\psi)\equiv -i x^\perp \cdot \nabla \psi,
\end{equation}
denotes the corresponding quantum mechanical rotation operator. 

Most rigorous mathematical results on vortex creation are based on standing wave solutions of \eqref{NLS_rot}, i.e. 
solutions of the form $\psi(t,x) = \varphi(x) e^{-i \mu t}$, $\mu\in \R$, which leads to the following nonlinear elliptic equation
\begin{equation}
\label{stat_NLS}
-\frac{1}{2} \Delta \varphi + \lambda |\varphi|^{2} \varphi + V(x) \varphi - \Omega L \varphi - \mu \varphi =0.
\end{equation}
Equation \eqref{stat_NLS} can be interpreted as the 
Euler-Lagrange equation of the associated Gross-Pitaevskii energy functional \cite{PiSt, S}:
\begin{equation}\label{GPenergy}
E_{\rm GP}(\varphi):= \int_{\R^d} \bigg( \frac{1}{2} |\nabla \varphi|^2 + V(x) |\psi|^2 + \frac{\lambda}{2}|\varphi |^{4} -  \Omega \overline{\varphi}
L \varphi \bigg) \;dx,
\end{equation}
One possible way of constructing solutions to \eqref{stat_NLS} is thus to minimize \eqref{GPenergy} under the constraint $\|\varphi \|^2_{L^2}=M$, 
where $M>0$ denotes a given mass. This consequently yields a {\it chemical potential} $\mu=\mu(M)\ge 0$ playing the role of a Lagrange multiplier. 
In order to do so, one requires $\omega>|\Omega| $ which ensures that $E_{\rm GP}$ is bounded from below. 
Physically speaking, this condition means that the confinement potential $V(x)$ is stronger than the rotational forces, 
ensuring that the BEC stays trapped.
Within this framework, it was proved in \cite{S} 
that the hereby obtained physical {\it ground states}, i.e. energy minimizing solutions of \eqref{stat_NLS}, undergo a symmetry breaking (of the rotational symmetry) 
for sufficiently strong $\Omega$ and/or $\lambda \ge 0$. The latter is interpreted as the onset of vortex-lattice creation. 

On the other hand, it is often argued in the physics literature that a small amount of dissipation must be present for the 
experimental realization of stable vortex lattices, cf. \cite{CMK, KT, KNKM}. 
In order to describe such dissipative effects, not present in the original Gross-Pitaevskii equation \eqref{NLS_rot}, 
the following phenomenological model has been proposed in \cite{TKU} and subsequently been studied in, e.g., \cite{CGKT, CP, GAF, KT, KNKM}:
\begin{equation}\label{dissGP}
(i\beta-\gamma) \partial_t \psi = -\frac{1}{2} \Delta \psi + \lambda |\psi|^{2} \psi + V(x) \psi - \Omega L \psi - \mu \psi.
\end{equation}
Here $\beta\in\R$ and $\gamma>0$ are physical parameters whose ratio describes the strength of the dissipation. (In \cite{GAF} the authors use 
formal arguments based on quantum kinetic theory to obtain $\frac{\gamma}{\beta} \approx
0.03$.)  Note that any {\it time-independent solution} $\psi = \varphi(x)$ of \eqref{dissGP} solves the stationary NLS \eqref{stat_NLS}. 
In contrast to \eqref{NLS_rot}, equation \eqref{dissGP} is no longer Hamiltonian and only makes sense for $t\in \R_+$.

\subsection{Mathematical setting and main result} 
This work is devoted to a rigorous mathematical analysis of \eqref{dissGP}. In particular, we shall be interested in the long time behavior of its solutions as $t \to +\infty$. 
To this end, it is convenient to re-scale time such that $\beta^2+\gamma^2=1$. 
Then we can write \[
i\beta-\gamma = -e^{i\vartheta}, \quad \text{for some $\vartheta\in\Big(-\frac \pi  2,\frac \pi  2\Big)$.}
\] 
Note that by doing so, the real part of $e^{i\vartheta}$ has the same (positive) sign as $\gamma > 0$. We shall thus be concerned with the following initial value problem for $(t,x)\in \R_+\times \R^d$ and $d=2,3$:
\begin{equation}
\label{NLS_diss}
-e^{i\vartheta}\partial_t \psi = -\frac{1}{2} \Delta \psi + \lambda |\psi|^{2\sigma} \psi + V(x) \psi - \Omega  L \psi - \mu \psi, \quad 
\psi _{\mid t =0}  =\psi_0(x),
\end{equation}
where $\psi_0$ will be chosen in some appropriate function space (see below), and $\sigma > 0$ a generalized nonlinearity. 
Formally, the usual Gross-Pitaevskii equation \eqref{NLS_rot} is obtained from \eqref{NLS_diss} in the limit $\vartheta \to \pm \frac{\pi}{2}$. On the other hand, if $\vartheta =0$ the Hamiltonian character 
of the model is completely lost and \eqref{NLS_diss} instead resembles a nonlinear parabolic 
equation of {\it complex Ginzburg-Landau} (GL) type, cf. \cite{AK} for a review on this type of models.

Equation \eqref{NLS_diss} can thus be seen as a hybrid between the Gross-Pitaevskii/Non-linear Schr\"odinger equation and 
the complex Ginzburg-Landau equation. Both kind of models have been extensively studied in the mathematical literature:
For local and global well-posedness results on NLS, 
with or without quadratic potentials $V$, we refer to \cite{Caze, Car05, Car09}. Allowing for the inclusion of a rotation term, the initial value problem for \eqref{NLS_rot} 
has been analyzed in \cite{AMS}.
Similarly, well-posedness results for the complex GL equation in various spaces can be found in \cite{DGL, GV1,GV2}. The existence and basic properties of 
a global attractor for solutions to GL (on bounded domains $D\subset \R^d$) are studied in \cite{Te} and \cite{MM}. 
Moreover, the so-called {\it inviscid limit} which links solutions of 
GL to solutions of NLS has been established in \cite{Wu}. However, none of the aforementioned results directly apply to the model 
\eqref{NLS_diss}, which involves an unbounded (quadratic) potential $V$ and a rotation term, 
neither of which have been included in the studies on GL cited above. 
One should also note that the GL equation in its most general form allows for 
different complex pre-factors in front of the Laplacian and the nonlinearity. In our case those
pre-factors coincide, allowing for a closer connection to NLS.
Very recently, a similar type of such restricted GL models with $\lambda <0$ (and without potential and rotation terms) has 
been studied in \cite{CDW1, CDW2} as an ``intermediate step"
between the NLS and the nonlinear heat equation. 
Finally, we also mention that equation \eqref{dissGP} with $\beta=0$ is used to 
numerically obtain the Gross-Pitaevskii ground states, cf. \cite{BaoDu, CST}.

As announced before, we shall mainly be interested in the long time behavior of solutions to \eqref{NLS_diss}. In view of this the main result 
of our paper can be stated in the following form:

\begin{theorem}\label{thmmain} Let $d\in \{2,3\}$, $\omega>|\Omega|$, $\vartheta \in (-\frac \pi  2 , \frac \pi  2 )$, $\lambda \ge  0$, and $0< \sigma < \frac{d}{2(d-2)}$. 
Then for any 
\[
\psi_0 \in \Sigma:=\big\{f\in H^1(\R^d)\; : \; |x|f\in L^2(\R^d)\big\}
\]
there exits a unique strong solution $\psi \in C([0,\infty), \Sigma)$ to \eqref{NLS_diss}.  The associated mass and energy thereby satisfy the identities \eqref{eq:mass} and \eqref{eq:energy} below. 
If, in addition, $\lambda >0$, the evolutionary system \eqref{NLS_diss} possesses a global attractor $\mathcal A\subset \Sigma$, i.e., $\mathcal A$ is
is invariant under the time-evolution associated to \eqref{NLS_diss} and such that
\[
\inf_{\phi \in \mathcal A} \| \psi(t) - \phi \|_{L^2(\R^d)} \stackrel{t\to +\infty}{\longrightarrow} 0. 
\]
More precisely, 
\[
\mathcal A=\big \{\psi_0 : \psi_0 = \psi(0) \mbox{ for some } \psi \in C({(-\infty}, \infty);\Sigma) \, \text{solution to \eqref{NLS_diss}} \big \} 
\]
is a connected compact set in $L^2(\R^d)$ and uniformly attracts bounded sets in $L^2(\R^d)$. 
Furthermore, for $\sigma \ge \frac{2}{d}$, $\mathcal A$ has finite Hausdorff and fractal dimensions which depend on the given parameters as described in Proposition \ref{prop:dimension}.
Finally, if $\mu < \frac{\omega d}{2}$ it holds $\mathcal A = \{ 0 \}$.
\end{theorem}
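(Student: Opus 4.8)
The plan is to read \eqref{NLS_diss} as a complex gradient flow and to exploit the strict parabolicity encoded in $\cos\vartheta>0$. Writing $H\psi:=-\frac12\Delta\psi+\lambda|\psi|^{2\sigma}\psi+V\psi-\Omega L\psi-\mu\psi$ and $\mathcal{L}_0:=-\frac12\Delta+V-\Omega L$, the equation becomes $\partial_t\psi=-e^{-i\vartheta}H\psi$, and since $\re\,e^{-i\vartheta}=\cos\vartheta>0$ the linear part $-e^{-i\vartheta}(\mathcal{L}_0-\mu)$ generates an analytic semigroup on $L^2(\R^d)$ and on $\Sigma$; this is precisely where the heat-kernel and smoothing estimates enter. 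First I would establish local well-posedness in $\Sigma$ via Duhamel's formula and a contraction argument, the map $\psi\mapsto\lambda|\psi|^{2\sigma}\psi$ being locally Lipschitz in the relevant norms thanks to the Sobolev embeddings available for $\Sigma$, for which the range $0<\sigma<\frac{d}{2(d-2)}$ (no restriction when $d=2$) is tailored. Uniqueness and continuous dependence follow from the same contraction, giving a continuous semigroup $S(t)$.

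Next I would derive the two a priori identities. Testing the equation against $\bar\psi$ and separating real and imaginary parts---using that $\mathcal{L}_0$ is self-adjoint and that $Q(\psi):=\langle\psi,H\psi\rangle=B(\psi)+\lambda\|\psi\|_{L^{2\sigma+2}}^{2\sigma+2}-\mu\|\psi\|_{L^2}^2$ is real, where $B(\psi):=\langle\psi,\mathcal{L}_0\psi\rangle$---yields the mass identity \eqref{eq:mass}, $\frac{d}{dt}\|\psi\|_{L^2}^2=-2\cos\vartheta\,Q(\psi)$; testing against $\partial_t\psi$ yields the energy identity \eqref{eq:energy}, $\frac{d}{dt}E(\psi)=-2\cos\vartheta\|H\psi\|_{L^2}^2\le0$, where $E$ is the Gross--Pitaevskii energy with $H=\delta E/\delta\bar\psi$. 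The hypothesis $\omega>|\Omega|$ makes $B$ coercive on $\Sigma$: estimating $\Omega\langle\psi,L\psi\rangle\le\frac{|\Omega|}{2}\big(\omega\,\|\,|x|\psi\,\|_{L^2}^2+\omega^{-1}\|\nabla\psi\|_{L^2}^2\big)$ gives the coercivity $B(\psi)\ge\kappa\|\psi\|_\Sigma^2$ for some $\kappa>0$, while the spectral bottom of $\mathcal{L}_0$ provides the sharp bound $B(\psi)\ge\frac{\omega d}{2}\|\psi\|_{L^2}^2$. Hence the modified energy $\tilde E:=E+\mu\|\psi\|_{L^2}^2=B(\psi)+\frac{\lambda}{\sigma+1}\|\psi\|_{L^{2\sigma+2}}^{2\sigma+2}$ controls $\|\psi\|_\Sigma^2$, and combining the two identities with $\mu\,Q\le\frac12\|H\psi\|_{L^2}^2+\frac{\mu^2}{2}\|\psi\|_{L^2}^2$ gives $\frac{d}{dt}\tilde E\le C\tilde E$. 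Gronwall's lemma then excludes finite-time blow-up and produces the global solution $\psi\in C([0,\infty),\Sigma)$.

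For the long-time dynamics I would work in the standard framework of dissipative semigroups, where $\lambda>0$ is used exactly to produce a bounded absorbing set in $L^2$. From \eqref{eq:mass} and $B\ge\frac{\omega d}{2}\|\psi\|_{L^2}^2$ the only obstruction is the indefinite term $-\mu\|\psi\|_{L^2}^2$, which I would absorb using the confining weight: splitting $\int_{\R^d}=\int_{|x|<R}+\int_{|x|\ge R}$, on $\{|x|\ge R\}$ one has $\int_{|x|\ge R}|\psi|^2\le R^{-2}\|\,|x|\psi\,\|_{L^2}^2\le CR^{-2}B(\psi)$, while on the finite ball $\{|x|<R\}$ the defocusing nonlinearity provides the pointwise bound $(c+\mu)|\psi|^2\le\lambda|\psi|^{2\sigma+2}+K$. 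Choosing $R$ large this yields $Q(\psi)\ge c\,\|\psi\|_{L^2}^2-C$, hence $\frac{d}{dt}\|\psi\|_{L^2}^2\le-2\cos\vartheta\,c\,\|\psi\|_{L^2}^2+2\cos\vartheta\,C$ and an $L^2$ absorbing ball. I would then bootstrap to a bounded absorbing set in $\Sigma$ from the energy identity together with the analytic-semigroup smoothing (Duhamel applied to the $L^2$ bound, the term $\|\,|\psi|^{2\sigma}\psi\,\|_{L^2}$ being controlled by interpolation in the subcritical range). Since the confining weight makes the embedding $\Sigma\hookrightarrow L^2$ compact, a $\Sigma$-bounded absorbing set is precompact in $L^2$, giving asymptotic compactness; the classical attractor theorem then furnishes $\mathcal{A}$ together with its characterization as the union of complete bounded trajectories, its compactness in $L^2$ and attraction of bounded sets, while connectedness follows from the connectedness of the phase space and the continuity of $S(t)$. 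Finiteness of the Hausdorff and fractal dimensions is obtained by the volume-contraction (uniform Lyapunov exponent) method, estimating the trace of the linearization along orbits in $\mathcal{A}$ and counting the eigenvalues of $\mathcal{L}_0$ needed to render it negative, as carried out in Proposition~\ref{prop:dimension}. Finally, for $\mu<\frac{\omega d}{2}$ the ground state of $\mathcal{L}_0$ is the rotationally symmetric Gaussian of zero angular momentum, so rotation does not lower the bottom of the spectrum and $B(\psi)\ge\frac{\omega d}{2}\|\psi\|_{L^2}^2$ is sharp; then $Q(\psi)\ge(\frac{\omega d}{2}-\mu)\|\psi\|_{L^2}^2>0$ for $\psi\neq0$, \eqref{eq:mass} forces exponential decay of the mass, and the only complete bounded orbit is the fixed point $0$, i.e.\ $\mathcal{A}=\{0\}$.

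The main obstacle, which sets this problem apart from the bounded-domain Ginzburg--Landau attractor theory of \cite{Te,MM}, is the unbounded quadratic potential together with the rotation term on all of $\R^d$. Every key estimate hinges on the spectral analysis of the rotating harmonic oscillator $\mathcal{L}_0$: its coercivity on $\Sigma$ (requiring $\omega>|\Omega|$), the compactness of $\Sigma\hookrightarrow L^2$ replacing the Rellich theorem, and heat-kernel bounds for the analytic semigroup $e^{-e^{-i\vartheta}t\mathcal{L}_0}$. I expect the two genuinely delicate points to be the construction of the $L^2$ absorbing set despite the infinite volume---handled by the weight-splitting above---and the uniform-in-time upgrade to $\Sigma$-regularity while keeping the nonlinearity subordinate through the subcritical Sobolev bounds.
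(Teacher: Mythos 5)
Your outline follows the same overall architecture as the paper (Duhamel local theory, the mass/energy identities \eqref{eq:mass}--\eqref{eq:energy}, coercivity of the quadratic form from $\omega>|\Omega|$, an absorbing set produced jointly by the confinement and the defocusing nonlinearity, compactness of $\Sigma\hookrightarrow L^2$, and volume contraction for the dimension), but two steps are done by a genuinely different route. For the absorbing set, you split $\R^d$ into $\{|x|<R\}\cup\{|x|\ge R\}$, absorb $\mu\|\psi\|_{L^2}^2$ on the exterior by the weight and on the interior by the pointwise Young inequality against $\lambda|\psi|^{2\sigma+2}$; the paper instead proves the interpolation inequality $\|f\|_{L^2}\le 2\|xf\|_{L^2}^{\theta}\|f\|_{L^p}^{1-\theta}$ and deduces $M(\psi)\le CE(\psi)^{(\sigma\theta+1)/(\sigma+1)}$ (Lemma \ref{lem:glob_bound}). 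Both exploit exactly the same mechanism and both degenerate as $|\Omega|\to\omega$; yours is arguably more elementary, the paper's gives a clean power law that is then fed into the energy inequality. For the attractor itself, you invoke the classical asymptotic-compactness theorem after observing that a $\Sigma$-bounded absorbing set is precompact in $L^2$; the paper deliberately avoids this phrasing (asymptotic compactness in $\Sigma$ is unknown) and instead runs the evolutionary-systems framework of \cite{C5} on the phase space $X=\{\|\psi\|_\Sigma\le\rho_\Sigma\}$ equipped with the $L^2$ metric. Your route lands in the same place, but only because the relevant compactness is taken in the weaker $L^2$ topology while the bound lives in $\Sigma$; one must then verify that the solution map is continuous (and that limits of trajectories are trajectories) with respect to $\dr_{L^2}$ on $X$, which is exactly what Lemma \ref{l:convergenceofLH} and Proposition \ref{prop:compact} supply.

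Two points in your proposal are asserted rather than proven, and they are precisely where the paper has to work hardest. First, you freely differentiate the energy identity, writing $\frac{d}{dt}F=-2\cos\vartheta\|H\psi\|_{L^2}^2$ and $\frac{d}{dt}\tilde E\le C\tilde E$. For solutions that are merely in $C([0,\infty);\Sigma)$ one does not know $\partial_t\psi\in C([0,T];L^2)$, so \eqref{eq:energy} may only be used in integrated form; the paper's Proposition \ref{prop:Ebound} is devoted to extracting the decay $E(t)\le K+e^{-\mu t\cos\vartheta}E(0)$ from the integral inequalities alone via a test-function argument. (The differential form \eqref{mass-derivative} of the mass identity is legitimate, but \eqref{energy-derivative} is not, as the paper notes after Lemma \ref{lem:Lyapunov}.) Second, your uniform-in-time upgrade from the $L^2$ absorbing ball to a $\Sigma$ absorbing ball via ``analytic-semigroup smoothing'' is plausible but not carried out, and it is not a one-liner: the nonlinear term must be controlled uniformly on unit time intervals in the full subcritical range, and $\nabla$ and $x$ do not commute with $S_\Omega(t)$, so one must propagate the closed system $(\psi,\nabla\psi,x\psi)$ through the commutator identities as in Proposition \ref{prop:loc_ex}. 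The paper sidesteps smoothing entirely by deducing the $\Sigma$ bound from the energy bound through Lemma \ref{lem:Ebound}. Neither issue is fatal, but as written these are the two genuine gaps between your sketch and a complete proof.
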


Here, $\Sigma$ is the physical energy space ensuring that $E_{\rm GP}(\psi(t))$ is finite. 
The assumption on $\sigma>0$ is thereby slightly more restrictive than the one for the usual $H^1$-subcritical nonlinearities (see Remark \ref{remH1} below). 
Note however, that we may always take $\sigma = 1$ in the above theorem which corresponds to the usual cubic nonlinearity. In addition, the condition $\omega>|\Omega|$ 
ensures that the confinement is stronger than the rotation, and thus, the system remains trapped for all times $t\ge 0$. 

As we shall see, neither the mass nor the (total) energy are conserved quantities of the time-evolution, but for $\lambda >0$, there are {\it absorbing balls} for $\psi$ in both 
the mass and the energy space, see Section \ref{sec:uniform} for a precise definition. The existence of a global attractor $\mathcal A$ therefore requires the presence of the nonlinearity 
and, of course, the presence of the confining potential $V$. Clearly, all stationary solutions $\varphi \in \Sigma$ of \eqref{stat_NLS} 
are members of $\mathcal A$. However, since for $\mu$ sufficiently large 
there are always at least two such solutions (namely, zero and the nontrivial energy minimizer) and since $\mathcal A$ is connected, it is unclear what the precise 
long-time behavior of \eqref{NLS_diss} is. 
Indeed, in the case of the GL equation for superconducting materials it is known \cite{TW} 
that the global attractor contains not only all possible steady state solutions, but also the heteroclinic orbits joining these steady states, and we 
consequently expect a similar behavior to also hold also in our model. 

Except in the case $\mu<\frac{\omega d}{2}$, the precise dependence of the dimension of $\mathcal A$ on the given physical parameters is not known. 
In Section \ref{sec:dimension} we shall prove that the Hausdorff dimension ${\rm dim}_{\rm H}(\mathcal A)\le m$, where $m$ depends in a 
rather complicated way on all the involved parameters. It is interesting, however, to check that $m\to +\infty$, as $|\Omega|\to \omega$. 
In other words, the influence of the rotation term potentially increases the dimension of the attractor. This is consistent with 
numerical and physical experiments on the creation of vortex lattices in rotating BEC. For a recent (non-rigorous) study which employs numerical simulations and asymptotic analysis 
to investigate the corresponding pattern formation mechanism, we refer to \cite{CGKT}. In fact, one easily observes that in the linear case ($\lambda =0$) the dynamics 
admits exponentially growing modes, cf. Section \ref{sec:linear} below for more details. It is argued in \cite{CGKT} that this type of instability mechanism is responsible for the 
nucleation of a large number of vortices at the periphery of the atomic cloud, as can be seen in physical experiments.\\

The proof of Theorem \ref{thmmain} will be done in several steps: First, we shall establish local (in-time) well-posedness of \eqref{NLS_diss} in Section \ref{sec:local} below.
Then, we will show how to extend this result to global in-time solutions in Section \ref{sec:global}, where we also prove that for $\mu < \frac{\omega d}{2}$ solutions decay to zero as $t\to +\infty$. 
The main technical step for the existence of an attractor is then to prove certain uniform bounds on the total mass and energy as done in Section \ref{sec:uniform}.
This  will allow us to conclude the existence of an absorbing ball and of a global attractor in Section \ref{sec:attractor}, where we shall also prove the announced estimates on the dimension. 
Finally, we collect some basic computations regarding the kernel of the linear semigroup in the appendix.

\section{Mathematical preliminaries}

In this section we shall collect several preliminary results to be used later on.

\subsection{Spectral properties of the linear Hamiltonian}\label{sec:linear}
In the following, we denote by
\begin{equation}\label{H}
H_\Omega := -\frac{1}{2} \Delta + V(x)  - \Omega L ,\quad x\in \R^d,  
\end{equation}
the linear Hamiltonian operator, with $V(x)$ given in \eqref{Vquadr}. Note that in the case without rotation, i.e. $\Omega=0$, the operator 
\begin{equation}\label{H_0}
H_0=\frac12 \left(- \Delta+\omega ^2 {|x|^2}\right),
\end{equation} is nothing but the 
(isotropic) quantum mechanical harmonic oscillator in, respectively, $d=2$, or $3$ spatial dimensions. The spectral properties of $H_0$ are well known \cite{Fl, T}: 
\begin{lemma}\label{lem:H}
$H_0$ is essentially self-adjoint on $C_0^\infty(\R^d)\subset L^2(\R^d)$ with compact resolvent. The 
spectrum of $H_0$ is given by $\sigma(H_0) = \{ E_{0,n}\}_{n\in \N}$, where
\[
E_{0,n} = \omega \Big( \frac{d}{2} +n-1 \Big),\quad n =1,2, \dots.
\]
In addition, the eigenvalue $E_{0,n}$ is $\left(\begin{matrix} d+n -2 \\ n-1 \end{matrix}\right)-$fold degenerate.
\end{lemma}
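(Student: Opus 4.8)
The plan is to diagonalize $H_0$ explicitly by exploiting the separation of variables inherent in the isotropic harmonic oscillator. First I would note that, since the potential $V(x) = \frac{1}{2}\omega^2|x|^2$ is smooth, nonnegative, and tends to $+\infty$ as $|x| \to \infty$, the quadratic form associated to $H_0$ is bounded below on $C_0^\infty(\R^d)$; essential self-adjointness on this domain then follows from a standard criterion for Schr\"odinger operators with confining potentials (e.g.\ the Faris--Lavine theorem). Equivalently, essential self-adjointness can be deduced a posteriori from the complete orthonormal eigenbasis constructed below.

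The heart of the argument is that $H_0 = \sum_{j=1}^d \frac{1}{2}\big(-\partial_{x_j}^2 + \omega^2 x_j^2\big)$ is a sum of $d$ commuting one-dimensional harmonic oscillators. I would introduce the annihilation and creation operators $a_j = \frac{1}{\sqrt{2\omega}}(\omega x_j + \partial_{x_j})$ and $a_j^* = \frac{1}{\sqrt{2\omega}}(\omega x_j - \partial_{x_j})$, which satisfy $[a_j, a_k^*] = \delta_{jk}$ and give $H_0 = \omega\big(\sum_{j=1}^d a_j^* a_j + \frac{d}{2}\big)$. Since the normalized one-dimensional Hermite functions $\{h_m\}_{m\in\N_0}$ form an orthonormal basis of $L^2(\R)$ diagonalizing each factor, the tensor products $h_\alpha(x) := \prod_{j=1}^d h_{\alpha_j}(x_j)$, indexed by multi-indices $\alpha \in \N_0^d$, form an orthonormal basis of $L^2(\R^d)$ consisting of eigenfunctions of $H_0$ with eigenvalue $\omega\big(|\alpha| + \frac{d}{2}\big)$, where $|\alpha| = \alpha_1 + \dots + \alpha_d$.

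Completeness of this eigenbasis then forces the spectrum to be exactly $\{\omega(\frac{d}{2} + k) : k \in \N_0\}$ with no continuous part; writing $k = n-1$ recovers $E_{0,n} = \omega(\frac{d}{2} + n - 1)$. The multiplicity of $E_{0,n}$ equals the number of multi-indices $\alpha \in \N_0^d$ with $|\alpha| = n-1$, i.e.\ the number of weak compositions of $n-1$ into $d$ nonnegative parts; a stars-and-bars count yields $\binom{n-1+d-1}{d-1} = \binom{d+n-2}{n-1}$, which is the stated degeneracy. Finally, the eigenvalues have finite multiplicity and accumulate only at $+\infty$, so $(H_0 + 1)^{-1}$ is a norm limit of finite-rank operators and hence compact (equivalently, the form domain $\Sigma$ embeds compactly into $L^2(\R^d)$).

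The spectral computation is entirely classical, so I anticipate no serious obstacle. The only points requiring care are that $C_0^\infty(\R^d)$ is genuinely an operator core and that the Hermite system is complete in $L^2(\R^d)$ --- this completeness is what rules out any residual spectrum or stray eigenvalues outside the list above. The degeneracy formula is purely combinatorial.
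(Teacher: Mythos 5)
Your proposal is correct, and in fact the paper offers no proof of this lemma at all---it simply cites \cite{Fl} and \cite{T} for these ``well known'' spectral facts; your ladder-operator/Hermite-function diagonalization, the stars-and-bars count of multi-indices with $|\alpha|=n-1$, and the compactness of the resolvent via discrete eigenvalues accumulating only at $+\infty$ are exactly the standard arguments those references contain. You also correctly flag the one genuinely delicate point, namely that $C_0^\infty(\R^d)$ is an operator core even though the Hermite eigenfunctions are merely Schwartz rather than compactly supported, and the Kato/Faris--Lavine criterion you invoke (applicable since $V\ge 0$ is smooth and polynomially bounded) settles this.
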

In particular, $E_{0,n} \ge E_{0,1}\equiv \frac{\omega d}{2}>0$, for all $n \in\N$. 
The associated eigenfunctions form a complete orthonormal basis of $L^2(\R^d)$. In $d=2$, they are explicitly given by \cite{Fl}:
\[
\chi^0_{n_1, n_2}(x_1, x_2) = f_{n_1}(x_1) f_{n_2}(x_2),\quad n_j\in \N,
\]
where  $n_1 + n_2 = n$ and the $f_{n_j}\in \mathcal S(\R)$ are the eigenfunctions of the one-dimensional harmonic oscillator, i.e., an appropriately normalized Gaussians times a Hermite polynomial of order $n_j-1$. 
An analogous formula holds in $d=3$ dimensions.

In the case with $\Omega\not =0$, we first note that the commutator $[H_\Omega, L]=0$, due to the rotational symmetry of the potential $V$. 
This implies that $H_\Omega$ and $L$ have a common orthonormal basis of eigenfunctions $\{\chi_n\}_{n\in \N_0}$, which can be obtained by taking appropriate linear combinations of the 
eigenvalues of $H_0$, see \cite{Fl}. An important assumption throughout this work, will 
be that $\omega>|\Omega|$, ensuring confinement of the BEC. 
In mathematical terms, this condition implies that the rotational term can be seen as a perturbation of the positive definite operator
$H_0$, such that $H_\Omega$ is still positive definite. In other words, we have that 
\begin{equation}\label{eigenvalue}
H_\Omega \chi_n = E_{\Omega,n} \chi_n,
\end{equation}
where the new eigenvalues $E_{\Omega, n}\in \R$ (indexed in increasing order) are related to the unperturbed $E_{0,n}$ via 
\[
\{E_{\Omega, n}, \ n \in \N\}= \{ E_{0,\ell} + m \Omega, \ -\ell+1 \le m\le \ell-1, \, \text{for}\, \ell \in\N\}.
\] 
In particular, under the assumption that $\omega > \Omega$, we still have: $E_{\Omega, n}\ge \frac{\omega d}{2}$, for all $n\in \N$. Thus, the ground state energy 
eigenvalue stays the same with and without rotation. \\

With these spectral data at hand, we can now look at the {\it linear} time-evolution ($\lambda =0$) associated to \eqref{dissGP}, i.e.
\begin{equation}\label{linGP}
(i\beta-\gamma) \partial_t \psi =H_\Omega \psi - \mu \psi.
\end{equation}
Using the fact that $\{\chi_n\}_{n\in \N}$ comprises an orthonormal basis, we can decompose the solution to this equation via
\begin{equation}\label{decomp}
\psi(t,x) =   \sum_{n \in \N} c_n(t) \chi_n (x),
\end{equation}
where $ \{ c_n (t) \}_{n\in \N} \in \ell^2$, i.e. $\sum |c_n(t)|^2 < +\infty$. In view of \eqref{eigenvalue}, \eqref{linGP} we find
\[
c_n(t) = c_n(0) \exp (-(i\beta + \gamma)(E_{\Omega, n}-\mu) t),
\]
In particular, the normalization $\beta^2+\gamma^2=1$ yields 
\[
\| \psi_n (t) \|^2_{L^2} \equiv \sum_{n=1}^\infty |c_n(t)|^2 = \sum_{n=1}^\infty |c_n(0)|^2 e^{- 2\cos\vartheta (E_{\Omega, n} - \mu) t},
\]
where we identify $\gamma =\cos \vartheta$.
For $ \vartheta \in (-\frac{\pi}{2}, \frac{\pi}{2})$ 
the right hand side exponentially decays to zero as $t\to+\infty$, provided $\mu < E_{\Omega, n}$, for all $n\in \N$. This is equivalent to saying that $\mu <E_{\Omega, 1}$.
On the other hand, if $c_n(0)=0$, then $c_n(t)=0$ for all $t>0$. Hence, given a $\mu > E_{\Omega, 1}$ the solution is exponentially decaying as long as 
the initial data is such that $c_n(0)=0$ for all $n\in \N$ for which $E_{\Omega, n} < \mu$. Otherwise, we have, in general, exponential growth of the 
$L^2$-norm of $\psi(t)$. 

\begin{remark}
In the case where we choose $\mu = E_{\Omega, m}$ for some fixed $m\in \N_0$, we see that the $|c_m(t)|^2 = |c_m(0)|^2$ is a conserved quantity of the linear time evolution. 
All higher modes exponentially decay towards zero, whereas all lower modes will exponentially increase. We consequently expect linear instability of stationary states of the nonlinear system.
\end{remark}

\subsection{Dispersive properties of the linear semi-group} 
In order to set up a well-posedness result for the nonlinear equation \eqref{dissGP}, we need to study the regularizing properties of the 
linear semigroup associated to $H_\Omega$, i.e.
\[
S_\Omega(t):= \exp\left(-e^{-i\vartheta} t  H_\Omega\right), \quad t\in \R_+,
\]
As usual we identify $S_\Omega(t)$ with its associated integral kernel via
\[
S_\Omega(t) f(x) = \int_{\R^d} S_\Omega(t,x,y) f(y) \, dy,\quad f \in L^2(\R^d).
\]
The following lemma states some basic properties of $S_\Omega(t)$ to be used later on.

\begin{lemma}\label{lem:kernel}
Let $\vartheta \in (-\frac{\pi}{2}, \frac{\pi}{2})$ and $t>0$. Then  
\begin{align}\label{SG_diss}
 S_\Omega(t,x,y) = \bigg(\frac{\omega}{2 \pi \sinh(e^{-i\vartheta} \omega t)}\bigg)^\frac{d}{2} \exp\left({\Phi(t,x,y)}\right),
\end{align}
where the pre-factor in front of the exponent is understood in terms of the principal value of the complex logarithm, and the phase function $F$ is given by
\begin{align*}
 \Phi(t,x,y) &=  -\frac{\omega}{\sinh(e^{-i\vartheta} \omega t)}\bigg(\frac{1}{2}(x^2+y^2)\cosh(e^{-i\vartheta}
\omega t) - \cosh(e^{-i\vartheta}\Omega t) (x_1 y_1 + x_2 y_2) \\&\qquad\qquad\qquad\qquad\qquad + i \sinh(e^{-i\vartheta} \Omega t) (x_2y_1 - x_1y_2) \bigg).
\end{align*}
Moreover, for $\omega>|\Omega|$, there exists $\delta>0 $ such that 
\begin{equation}\label{kernel_Lp}
 \|S_\Omega(t) f \|_{L^r} \le C\ t^{\frac{d}{2}(\frac{1}{r}-\frac{1}{q})} \| f \|_{L^q}
\end{equation}
and
\begin{equation}\label{kernel_Sigma}
  \|\nabla S_\Omega(t) f \|_{L^r} + \|x S_\Omega(t) f \|_{L^r},
\le C\ t^{-\frac{1}{2}+\frac{d}{2}(\frac{1}{r}-\frac{1}{q})} \| f \|_{L^q},
\end{equation}
for all $1\le q\le r \le \infty$ and all $0<t<\delta$, where the constants $C$ and $\delta$ only depend on $\vartheta$, $\omega$, and $\Omega$.
\end{lemma}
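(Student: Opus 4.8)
The plan is to obtain the explicit kernel \eqref{SG_diss} by factorizing the semigroup, and then to read off both estimates from the resulting Gaussian. Since the trap is rotationally symmetric, $H_0$ from \eqref{H_0} commutes with the angular momentum $L$, so that $H_\Omega=H_0-\Omega L$ with $[H_0,L]=0$. Consequently
\[
S_\Omega(t)=\exp\!\left(-e^{-i\vartheta}t\,H_\Omega\right)=\exp\!\left(-e^{-i\vartheta}t\,H_0\right)\circ\exp\!\left(e^{-i\vartheta}t\,\Omega L\right),
\]
which I would justify rigorously on the dense span of the joint eigenfunctions $\chi_n$, where both factors act diagonally and the first has rapidly decaying eigenvalues because $\re(e^{-i\vartheta})=\cos\vartheta>0$. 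The first factor is the harmonic-oscillator (Mehler) kernel, analytically continued to the complex time $e^{-i\vartheta}t$; it supplies the prefactor $(\omega/2\pi\sinh(e^{-i\vartheta}\omega t))^{d/2}$ together with the $\cosh(e^{-i\vartheta}\omega t)$ terms in $\Phi$. The second factor $\exp(e^{-i\vartheta}t\,\Omega L)$ is, since $L=-i\partial_\phi$ in polar coordinates, a rotation by the \emph{complex} angle $-ie^{-i\vartheta}\Omega t$; composing it with the Mehler kernel rotates $y$ inside the cross term $x\cdot y$, and the identities $\cos(-iw)=\cosh w$, $\sin(-iw)=-i\sinh w$ produce exactly the $\cosh(e^{-i\vartheta}\Omega t)(x_1y_1+x_2y_2)$ and $i\sinh(e^{-i\vartheta}\Omega t)(x_2y_1-x_1y_2)$ terms. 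As a rotation has unit Jacobian even for a complex angle, it contributes no extra prefactor. In $d=3$ the coordinate $x_3$ decouples and one tensors with the one-dimensional Mehler factor. As an independent check, one may verify directly that the stated Gaussian solves $\partial_t S=-e^{-i\vartheta}H_\Omega S$ with $S(0^+,x,y)=\delta(x-y)$.

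For the estimates I would work entirely from \eqref{SG_diss}. Expanding for small $t$, the prefactor has modulus $\sim t^{-d/2}$ (as $|e^{-i\vartheta}|=1$), while the leading part of $\Phi$ is $-\tfrac{e^{i\vartheta}}{2t}|x-y|^2$, whose real part $-\tfrac{\cos\vartheta}{2t}|x-y|^2$ is strictly negative since $\vartheta\in(-\tfrac\pi2,\tfrac\pi2)$. Writing $\re\Phi=-\tfrac{b(t)}{2}|x-y|^2-\tfrac{a(t)-b(t)}{2}(|x|^2+|y|^2)+(\text{antisymmetric rotation term})$, the confining correction satisfies, to leading order, $a(t)-b(t)=\omega\,\re\tanh(\tfrac12 e^{-i\vartheta}\omega t)\sim\tfrac12\omega^2\cos\vartheta\,t>0$ (the rotation perturbs this only at higher order in $t$), so the trap contributes a genuine, if weak, Gaussian decay in $x$ and $y$ separately. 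The role of the hypothesis $\omega>|\Omega|$ is precisely to guarantee that the full real quadratic form $\re\Phi$ — including the bounded antisymmetric cross term arising from $i\sinh(e^{-i\vartheta}\Omega t)$ — is negative definite; this is the same positivity of $H_\Omega$ recorded after \eqref{eigenvalue}, and it is what fixes the threshold $\delta$. Altogether one obtains a pointwise bound
\[
|S_\Omega(t,x,y)|\le C\,t^{-d/2}\exp\!\left(-\frac{\kappa}{t}|x-y|^2-\kappa' t\,(|x|^2+|y|^2)\right),\qquad 0<t<\delta .
\]

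Estimate \eqref{kernel_Lp} then follows at once: the kernel is dominated by the convolution kernel $t^{-d/2}e^{-\kappa|x-y|^2/t}$, whose $L^s$-norm with $\tfrac1s=1-\tfrac1q+\tfrac1r$ is $\sim t^{-\frac d2+\frac{d}{2s}}=t^{\frac d2(\frac1r-\frac1q)}$, so Young's inequality gives the claim. For \eqref{kernel_Sigma} I differentiate: $\nabla_x S_\Omega=(\nabla_x\Phi)S_\Omega$, and $\nabla_x\Phi$ has leading term $\sim\tfrac1t(x-y)$; on the Gaussian scale $|x-y|\sim\sqrt t$ this costs one extra factor $t^{-1/2}$. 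Likewise the multiplier $x$ is absorbed through the weak confinement factor via $|x|\,e^{-\kappa' t|x|^2}\lesssim t^{-1/2}e^{-\frac{\kappa'}{2}t|x|^2}$, again producing $t^{-1/2}$ while leaving an integrable Gaussian. Feeding the resulting bound $|\nabla_x S_\Omega(t,x,y)|+|x\,S_\Omega(t,x,y)|\le C\,t^{-(d+1)/2}\exp(-\tfrac{\kappa}{2t}|x-y|^2)$ into Young's inequality yields the exponent $-\tfrac12+\tfrac d2(\tfrac1r-\tfrac1q)$ of \eqref{kernel_Sigma}. The main obstacle is the second step: establishing negative definiteness of $\re\Phi$ uniformly for $0<t<\delta$ — quantifying how $\omega>|\Omega|$ controls the antisymmetric rotation term while simultaneously retaining both the sharp $t^{-1}$ weight on $|x-y|^2$ and the weak confinement on $|x|^2+|y|^2$ that is needed to absorb the factor $x$ in \eqref{kernel_Sigma}.
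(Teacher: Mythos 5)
Your proposal is correct and follows essentially the same route as the paper's appendix proof: the explicit kernel comes from Mehler's formula combined with the rotation (the paper performs a real time-dependent rotation of coordinates at the Schr\"odinger level and then substitutes $t\mapsto -ie^{-i\vartheta}t$, which is equivalent to your operator factorization $S_\Omega(t)=e^{-e^{-i\vartheta}tH_0}\circ e^{e^{-i\vartheta}t\Omega L}$), and the estimates are obtained exactly as you describe, by Taylor-expanding $\re\Phi$ near $t=0$ to get a heat-kernel-type Gaussian bound and then applying Young's inequality. The only imprecision is your parenthetical that the rotation perturbs the confinement coefficient only at higher order: the paper's expansion shows the coefficient of $(x+y)^2$ in $\re\Phi$ is $-\tfrac{1}{4}(\omega^2-\Omega^2)t\cos\vartheta$, so $\Omega$ enters at leading order precisely there (this is where $\omega>|\Omega|$ is used, and where the $t^{-1/2}$ absorption of the multiplier $x$ comes from), but since you separately invoke that hypothesis to secure negative definiteness, your argument still goes through.
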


The proof of Lemma~\ref{lem:kernel} is a lengthy but straightforward calculation given in the Appendix. It is based on the well-known Mehler formula, cf. \cite{Car09}, and 
a time-dependent change of coordinates introduced in \cite{AMS}. 

\begin{remark} 
The decay estimates stated above are the same as for the heat equation. Indeed, $S_\Omega(T)$ may be viewed as an analytic
perturbation of the classical heat semigroup. In the case without potential and without rotation, i.e. $\Omega = \omega = 0$, similar estimates 
have been derived in, e.g., \cite{CDW1}. \end{remark}

\section{Local well-posedness} \label{sec:local}

In this section we set up a local well-posedness result for the initial value problem \eqref{NLS_diss}. In order to do so, we use Duhamel's formula to rewrite \eqref{NLS_diss} as
\begin{equation}\label{mild_GP}
 \psi(t) = S_\Omega(t)\psi_0 - e^{-i\vartheta} \int_0^t S_\Omega(t-\tau) \big( \lambda |\psi(\tau)|^{2\sigma}- \mu \big) \psi(\tau)  \;d\tau ,
\end{equation}
for all $t\in[0,T]$. Here, and in the following, we denote $\psi(t) \equiv \psi(t,\cdot)$. We shall work in the physical energy space given by
\begin{equation*}
\Sigma=\big\{f\in H^1(\R^d)\; : \; |x|f\in L^2(\R^d)\big\}
\end{equation*}
and equipped with the norm
\[
\| f \|_{\Sigma}^2:= \| f \|_{L^2}^2 + \| \nabla f \|_{L^2}^2 + \| x f \|_{L^2}^2.
\]
The estimates on the semi-group $S_\Omega(t)$ stated in Lemma \ref{lem:kernel} allow us to infer the following 
result (which is similar to those in \cite{DGL, GV1}).
\begin{proposition}\label{prop:loc_ex}
Let $\lambda, \mu \in \R$, $\vartheta \in (-\frac \pi  2 ,\frac \pi  2 )$, $\omega>|\Omega|$, and $d\in \{2,3\}$. 
\begin{itemize}
\item[(i)] Let $p > \max (\sigma d, 2\sigma +1)$ and $\psi_0\in L^p(\R^d)$. Then there exists a time $T>0$ and a unique solution $\psi \in C([0,T];L^p(\R^d))$ to \eqref{NLS_diss}, depending continuously on the initial data.
\item[(ii)] If, in addition, $0 < \sigma < \frac{d}{2(d-2)}$ and $\psi_0 \in \Sigma $, then there exists a $T^*>0$ such that the solution from ${\rm (i)}$ satisfies 
\[
 \psi \in C([0,T^*]; \Sigma).
\]
Moreover, the solution is maximal in the sense that either $T^*=+\infty$, or the following blow-up alternative holds:
\begin{equation*}\label{blowup_alt}
\lim_{t\to T_-^*} \| \psi(t)\|_{\Sigma} = \infty.
\end{equation*}

\end{itemize}
\end{proposition}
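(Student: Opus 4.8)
The plan is to solve the mild formulation \eqref{mild_GP} by a contraction-mapping argument, treating the linear flow $S_\Omega(t)$ through the smoothing estimates of Lemma~\ref{lem:kernel} and the nonlinearity $|\psi|^{2\sigma}\psi$ as a perturbation. For part (i) I would fix $M,T>0$ and seek a fixed point of
\[
\mathcal{T}[\psi](t) := S_\Omega(t)\psi_0 - e^{-i\vartheta}\int_0^t S_\Omega(t-\tau)\big(\lambda|\psi(\tau)|^{2\sigma}-\mu\big)\psi(\tau)\,d\tau
\]
in the complete metric space $\{\psi\in C([0,T];L^p) : \sup_{[0,T]}\|\psi\|_{L^p}\le M\}$ with the $C([0,T];L^p)$ distance. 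Since \eqref{kernel_Lp} with $q=r=p$ gives $\|S_\Omega(t)\psi_0\|_{L^p}\le C\|\psi_0\|_{L^p}$ and strong continuity at $t=0$, the linear term is controlled and stays in $C([0,T];L^p)$.

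For the nonlinear term, the identity $\||f|^{2\sigma}f\|_{L^{p/(2\sigma+1)}}=\|f\|_{L^p}^{2\sigma+1}$ places $|\psi|^{2\sigma}\psi$ in $L^{p/(2\sigma+1)}$, and \eqref{kernel_Lp} with $q=p/(2\sigma+1)$, $r=p$ produces the time weight $(t-\tau)^{-\sigma d/p}$. The assumption $p>\sigma d$ renders this singularity integrable, so $\int_0^t(t-\tau)^{-\sigma d/p}\,d\tau = C\,t^{1-\sigma d/p}$, while $p>2\sigma+1$ guarantees $p/(2\sigma+1)$ is an admissible Lebesgue exponent; the term $\mu\psi$ is handled trivially. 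Combining the elementary bound $\big||a|^{2\sigma}a-|b|^{2\sigma}b\big|\le C(|a|^{2\sigma}+|b|^{2\sigma})|a-b|$ with H\"older gives the matching Lipschitz estimate, so for $T$ small (depending only on $M$ through $T^{1-\sigma d/p}$) the map $\mathcal{T}$ is a contraction on the ball. Banach's theorem yields a unique fixed point; uniqueness in the whole space and Lipschitz-continuous dependence on $\psi_0$ follow from the same estimates, completing (i).

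For part (ii) I would upgrade the $L^p$-solution to one in $C([0,T^*];\Sigma)$; by Sobolev embedding $\Sigma\hookrightarrow H^1\hookrightarrow L^p$ for the admissible $p$, so $\psi_0\in\Sigma$ indeed falls under (i), and the two solutions coincide by uniqueness. The delicate point is the \emph{linear} contribution $S_\Omega(t)\psi_0$: feeding it into \eqref{kernel_Sigma} with $q=r=2$ creates a factor $t^{-1/2}$ that is useless at $t=0$. Instead I would use the structure of $H_\Omega$ from \eqref{H}. Under $\omega>|\Omega|$ the associated quadratic form satisfies, by Cauchy--Schwarz and Young, $\langle H_\Omega f,f\rangle\gtrsim\|\nabla f\|_{L^2}^2+\|xf\|_{L^2}^2$, so $\Sigma$ is exactly the form domain $D(H_\Omega^{1/2})$ with equivalent norm; since $\re(e^{-i\vartheta})=\cos\vartheta>0$ the operator $e^{-i\vartheta}H_\Omega$ is m-sectorial. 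Hence $S_\Omega(t)$ is a bounded analytic semigroup on $\Sigma$ commuting with $H_\Omega^{1/2}$, giving $\|S_\Omega(t)\psi_0\|_\Sigma\le C\|\psi_0\|_\Sigma$ uniformly and strong continuity as $t\to0^+$.

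The nonlinear Duhamel term is then estimated in $\Sigma$ by applying the gradient and weight bounds \eqref{kernel_Sigma} with $r=2$ and a suitable $q\le2$ to $g=(\lambda|\psi|^{2\sigma}-\mu)\psi$: bounding $\|g\|_{L^q}$ by $\|\psi\|_\Sigma^{2\sigma+1}$ via $H^1\hookrightarrow L^{q(2\sigma+1)}$ requires $q(2\sigma+1)\le\frac{2d}{d-2}$, whereas integrability of the resulting weight $(t-\tau)^{-\frac12+\frac{d}{2}(\frac12-\frac1q)}$ requires $q>\frac{2d}{d+2}$. The subcriticality hypothesis $0<\sigma<\frac{d}{2(d-2)}$ — which, as the authors note, is somewhat stronger than plain $H^1$-subcriticality — comfortably accommodates a valid choice of $q$ and keeps the nonlinear map into $\Sigma$ well-defined with a time factor vanishing as $T\to0$. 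A second contraction in $C([0,T^*];\Sigma)$ (or an a~priori $\Sigma$-bound bootstrapped onto the $L^p$-solution) then closes (ii). The \textbf{main obstacle}, and the heart of the argument, is precisely this promotion to $\Sigma$: one must absorb the half-derivative smoothing loss $t^{-1/2}$ in \eqref{kernel_Sigma} by treating the linear flow through the semigroup/form-domain structure rather than the kernel estimate, and the nonlinear flow by trading that loss against the subcritical Sobolev gain encoded in the restriction on $\sigma$. The blow-up alternative is then standard: since the existence time at each step depends only on $\|\psi(t_0)\|_\Sigma$, a non-continuable solution must satisfy $\|\psi(t)\|_\Sigma\to\infty$ as $t\uparrow T^*$.
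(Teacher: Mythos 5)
Your part (i) follows the paper's argument essentially verbatim: the same contraction space, the same choice $r=p$, $q=p/(2\sigma+1)$ in \eqref{kernel_Lp}, and the same role for the two lower bounds on $p$. Part (ii), however, takes a genuinely different route. The paper never uses \eqref{kernel_Sigma} nor the $\Sigma$-boundedness of the semigroup; instead it computes the commutators $[\nabla,S_\Omega(t)]$ and $[x,S_\Omega(t)]$ via Duhamel (from $[\nabla,H_\Omega]=\omega^2x+i\Omega\nabla^\perp$ and $[x,H_\Omega]=\nabla-i\Omega x^\perp$), obtaining integral equations for $\nabla\psi$ and $x\psi$ whose linear parts are $S_\Omega(t)\nabla\psi_0$ and $S_\Omega(t)x\psi_0$ --- so the $t^{-1/2}$ loss never arises --- and whose nonlinear parts carry the derivative on the nonlinearity, $\nabla(|\psi|^{2\sigma}\psi)\sim|\psi|^{2\sigma}\nabla\psi$, estimated by \eqref{kernel_Lp} with $r=2$, $\tfrac1q=\tfrac{2\sigma}{p}+\tfrac12$; the requirement $q\ge1$, i.e.\ $p\ge4\sigma$, together with $p<\tfrac{2d}{d-2}$, is exactly where $\sigma<\tfrac{d}{2(d-2)}$ enters. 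You instead keep the Duhamel formula for $\psi$ itself, control the linear term by identifying $\Sigma$ with the form domain $D(H_\Omega^{1/2})$ (correct: the two-sided form bound follows from the Young-inequality argument of Lemma~\ref{lem:Ebound} together with $H_\Omega\ge\tfrac{\omega d}{2}$, and $S_\Omega(t)$ commutes with $H_\Omega^{1/2}$), and put the derivative and weight on the semigroup via \eqref{kernel_Sigma}, trading the resulting integrable singularity against the Sobolev bound $\|g\|_{L^q}\lesssim\|\psi\|_{\Sigma}^{2\sigma+1}$; your window $\tfrac{2d}{d+2}<q\le 2$ with $q(2\sigma+1)\le\tfrac{2d}{d-2}$ is nonempty already for $\sigma<\tfrac{2}{d-2}$, so on the nonlinear side your estimates are, if anything, less restrictive than the paper's (whose binding constraint comes from $p\ge 4\sigma$, which you avoid). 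Both routes are sound: the paper's buys independence from abstract semigroup theory at the cost of the commutator bookkeeping, while yours is shorter on the nonlinear term but needs the separate functional-analytic input. One detail worth making explicit: for the $\mu\psi$ piece of the $\Sigma$-estimate take $q=2$ (weight $(t-\tau)^{-1/2}$, integrable) rather than the $q<2$ used for the power nonlinearity, since $\|\psi\|_{L^q}$ with $q<2$ is not controlled by $\|\psi\|_{H^1}$ on all of $\R^d$.
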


\begin{proof}
The proof is based on a fixed point argument using Duhamel's formula and the properties of the semigroup $S_\Omega(t)$. To this end, we first note that the term $\mu\psi$ is of no importance here, as it can always be added in a subsequent step (in fact,
we could have included it in the kernel of $S_\Omega(t)$). Hence let us assume that $\mu=0$ for notational convenience. 

To prove (i), we will show that the mapping
\[
\psi \mapsto \Xi(\psi)(t) := S_\Omega(t)\psi_0 - e^{-i\vartheta} \int_0^t S_\Omega(t-\tau) \big(\lambda |\psi(\tau)|^{2\sigma}\psi(\tau)\big) \;d\tau
\]
is a contraction in the space
\[
 X_T:=\big\{ \psi\in C([0,T];L^p(\R^d)) \ : \ \|\psi\|_{L^\infty(0,T;L^p)} \le 2 \|\psi_0\|_{L^p} \big\}
\]
for small enough $T>0$. To do so, we can use the kernel estimate \eqref{kernel_Lp} with the following choice of parameters:
\[
\begin{split}
r=p\ge 2\sigma+1, \quad q=\frac{p}{2\sigma+1},\qquad  \text{when} \qquad d&=2,\\
r=p > \max (\sigma d, 2\sigma +1), \quad q=\frac{p}{2\sigma+1},\qquad  \text{when} \qquad d&=3.
\end{split}  
\]
Note that any such a choice of $p$ implies that $d\sigma <p$. One can also see that $1\le q  \le r\le \infty$ in both cases.
This yields
\begin{align*}
 \|\Xi(\psi)(t)\|_{L^p} &\le \|\psi_0\|_{L^p} + \lambda \int_0^t \big\| S_\Omega(t-\tau)\big(|\psi(\tau)|^{2\sigma}\psi(\tau)\big)\big\|_{L^p} \;d\tau\\
 &\le \|\psi_0\|_{L^p} + C \int_0^t (t-\tau)^{-{d\sigma}/{p}} \|\psi(\tau)\|_{L^p}^{2\sigma+1} \;d\tau\\
 &\le \|\psi_0\|_{L^p} + C \|\psi\|_{L^\infty(0,T;L^p)}^{2\sigma+1} 
 \int_0^T \tau^{-{d\sigma}/{p}} \;d\tau \end{align*}
Since $\sigma < \frac p d$, the remaining integral is finite and hence, 
\[
 \|\Xi(\psi)(t)\|_{L^p} \le  \|\psi_0\|_{L^p} + C T^{1-\frac{d\sigma}{p}} \|\psi\|_{L^\infty(0,T;L^p)}^{2\sigma+1} ,
\]
where $1-\frac{d\sigma}{p}>0$. Thus, for $T>0$ sufficiently small, we conclude that $\Xi$ indeed maps $X_T$ onto itself.
Likewise it holds that for two solutions $\psi$ and $\tilde \psi$
\begin{align*}
& \|\Xi(\tilde\psi)(t)-\Xi(\psi)(t)\|_{L^p}\\ &\le \lambda \int_0^t
\big\|S_\Omega(t-\tau)\big(|\tilde\psi(\tau)|^{2\sigma}\tilde\psi(\tau)-|\psi(\tau)|^{2\sigma}\psi(\tau)\big)\big\|_{L^p} \;d\tau\\
 &\le C \int_0^t (t-\tau)^{-{d\sigma}/{p}} \big(\|\tilde\psi(\tau)\|_{L^p}^{2\sigma} + \|\psi(\tau)\|_{L^p}^{2\sigma}\big) \|
\tilde\psi(\tau)-\psi(\tau) \|_{L^p} \;d\tau\\
 &\le C T^{1-\frac{d\sigma}{p}} \|\psi\|_{L^\infty(0,T;L^p)}^{2\sigma} \| \tilde\psi-\psi
\|_{L^\infty(0,T;L^p)},
\end{align*}
which shows that $\Xi$ is a contraction for $T>0$ sufficiently small. 

\medskip

To prove (ii), we first note that by Sobolev imbedding $\Sigma \hookrightarrow L^p(\R^d)$, for $p< p^*=\frac{2d}{d-2}$
when $d=3$ and $p<\infty$ when $d=2$, respectively. Thus $\Sigma\cap L^p(\R^d) = \Sigma$ for $p<p^*$. We now want to show that for $0 < \sigma < \frac{d}{2(d-2)}$, the 
$\Sigma$ norm of the solution is controlled by an appropriately chosen $L^p$ norm satisfying $p<p^*$ and the conditions in part ${\rm (i)}$. 

The first step to do so, relies on 
appropriate
expressions for the commutators $[\nabla, S_\Omega(t)]$ and $[x, S_\Omega(t)]$. At least formally, it holds that
\[
 -e^{i\vartheta}\partial_t [\nabla, S_\Omega(t)] = [\nabla, H_\Omega S_\Omega(t)] = H_\Omega [\nabla, S_\Omega(t)] + [\nabla, H_\Omega] S_\Omega(t),
\]
and one easily computes $$[\nabla, H_\Omega] = \nabla V + i \Omega \nabla^\bot = \omega^2 x + i \Omega \nabla^\bot, $$ in view of \eqref{Vquadr}. Hence, Duhamel's formula and the fact that $[\nabla, S_\Omega(0)]=0$ 
imply
\[
 [\nabla, S_\Omega(t)]  =-e^{-i\vartheta} \int_0^t S_\Omega(t-\tau) \big( \omega^2 x + i \Omega \nabla^\bot \big) S_\Omega(\tau)\;d\tau.
\]
Using the fact that
$$
[x, H_\Omega] = \nabla - i \Omega x^\bot ,
$$
we likewise obtain
\[
 [x, S_\Omega(t)]  = -e^{-i\vartheta} \int_0^t S_\Omega(t-\tau) \big( \nabla - i \Omega x^\bot \big) S_\Omega(\tau)\;d\tau.
\]
Straightforward calculations then yield
\begin{equation*}\label{nabla_psi}
 \nabla \psi(t) = S_\Omega(t) \nabla \psi_0 -e^{-i\vartheta} \int_0^t S_\Omega(t-\tau) \left(\lambda \nabla (|\psi|^{2\sigma} \psi) +(\omega^2 x + i \Omega \nabla^\bot)\psi \right)(\tau) \;d\tau 
 \end{equation*}
as well as 
\begin{equation*}
\label{x_psi}
 x \psi(t) = S_\Omega(t) x \psi_0 -e^{-i\vartheta} \int_0^t S_\Omega(t-\tau) \left(\lambda x |\psi|^{2\sigma} \psi +  ( \nabla - i \Omega x^\bot) \psi\right)(\tau)\, d\tau.
\end{equation*}
We consequently expect that the combination of $\psi, x\psi, \nabla \psi$ will form a closed set of estimates (a fact already observed in \cite{AMS}).

It follows that the $\Sigma$ norm of $\psi$ is controlled by its $L^p$ norm.
For instance, choose $r=2$ and $q$ such that
$$\frac 1q=\frac{2\sigma}{p}+\frac12, \quad \text{ in \eqref{kernel_Lp}.}$$ Note that the condition $q\ge 1$ is equivalent to $p\ge 4\sigma$ and thus we require the existence of a $p$ such that
\[
\max(4\sigma, \sigma d, 2\sigma +1) < p < \frac{2d}{d-2}.
\]
For $d\in \{ 2,3\}$ one can readily check that $0< \sigma <  \frac{d}{2(d-2)}$ ensures the existence of such a $p$. It follows that
\begin{align*}
&\, \Big\|\int_0^t S_\Omega(t-\tau) \nabla \big(|\psi|^{2\sigma} \psi\big)(\tau) \;d\tau \Big\|_{L^2} \\
&\, \le C \|\psi\|_{L^\infty(0,T;L^p)}^{2\sigma} \|\nabla \psi\|_{L^\infty(0,T;L^2)} 
 \int_0^t (t-\tau)^{-\frac{d\sigma}{p}}\;d\tau.
\end{align*}
On the other hand, the linear terms can be estimated with $r=q=2$ in \eqref{kernel_Lp}, to obtain
\begin{align*}
\| \nabla \psi \|_{L^\infty(0,T;L^2)} \le & \,  \| \nabla \psi_0 \|_{L^2}+ C_1 T^{1-\frac{d\sigma}{p}} \|\psi\|_{L^\infty(0,T;L^p)}^{2\sigma}  \| \nabla \psi\|_{L^\infty(0,T;L^2)}  \\
& \, +C_2 T \left(  \| x \psi\|_{L^\infty(0,T;L^2)}  + 
  \| \nabla \psi\|_{L^\infty(0,T;L^2)}  \right)\\
  \le & \,  \| \nabla \psi_0 \|_{L^2}+ C_3 T^{1-\frac{d\sigma}{p}} \|\psi\|_{L^\infty(0,T;L^p)}^{2\sigma}  \| \nabla \psi\|_{L^\infty(0,T;L^2)}.
\end{align*}
for $T>0$ sufficiently small (depending on the size of $\|\psi\|_{L^\infty L^p}$). Similar arguments for $\psi$ and $x\psi$ imply
\begin{align*}
\| \psi \|_{L^\infty(0,T;\Sigma)} \le \| \psi_0 \|_{\Sigma}+ C T^{1-\frac{d\sigma}{p}} \|\psi\|_{L^\infty(0,T;L^p)}^{2\sigma}  \| \psi\|_{L^\infty(0,T;\Sigma)}  .
\end{align*}
Choosing $T>0$ even smaller, if necessary, the second term on the right hand side can be absorbed on the left hand side and we are done. 
As before, this inequality also applies to the differences of two solutions $\psi, \tilde \psi$, which yields the continuity of $\psi$ in $\Sigma$. 

We denote by $T^*>0$ the maximal time of existence in $\Sigma$. This is always less than or equal to $T>0$, the maximal time of existence in $L^p(\R^d)$. 
To prove the blow-up alternative, assume by contradiction that $T^*<\infty$, and $\| \psi(t, \cdot)\|_{\Sigma}$ remains bounded for $t\in [0, T^*]$. Then, by Sobolev imbedding 
$\| \psi(t, \cdot)\|_{L^p}$ also remains bounded and thus, we can restart the local existence argument in $\Sigma$ leading to a contradiction. 
\end{proof}

\begin{remark}\label{remH1}
Unfortunately, our method of proof does not yield existence of solutions for the full $H^1$-subcritical regime, i.e., $\sigma < \frac{2}{d-2}$. We expect that 
this is only a technical issue that can be overcome using a different approach (for example, by using ideas from \cite{GV1}, or by 
generalizing the space-time estimates of \cite{Bax} to $S_\Omega$). Note, however, that our slightly more restrictive condition $\sigma < \frac{d}{2(d-2)}$ still allows to take $\sigma =1$ in 
$d=3$. Hence, the physically most relevant case of a cubic nonlinearity is covered.
\end{remark}

\section{Global existence and asymptotic vanishing of solutions} \label{sec:global}

In this section, we shall first prove the global existence of solutions in the energy space before showing that for any choice of $\mu < E_{\Omega,1}$, the 
solutions asymptotically vanish as $t\to +\infty$.

\subsection{Global existence}

In order to prove global well-posedness of \eqref{NLS_diss}, we 
will need to collect some useful a-priori estimates. To this end, we denote for $\psi\in \Sigma$ the {\it total mass} by
\begin{equation}\label{mass}
M(\psi):= \| \psi \|_{L^2}^2,
\end{equation}
and the {\it total energy} by
\begin{equation}\label{energy}
E(\psi):= \int_{\R^d} \bigg( \frac{1}{2} |\nabla \psi|^2 + V(x) |\psi|^2 + \frac{\lambda}{\sigma +1}|\psi |^{2\sigma+2} -  \Omega \overline{\psi}
L \psi \bigg) \;dx.
\end{equation}
The latter is nothing but the sum of the kinetic, potential, nonlinear potential, and rotational energy. Clearly, for $\psi \in \Sigma$, 
Sobolev's imbedding implies that all the terms in $E(\psi)$ are finite, provided $\sigma < \frac{2}{d-2}$ (and hence also for our range of $\sigma$). 
For simplicity of notation, we will write $E(t) \equiv E(\psi(t,\cdot))$ and likewise for $M(t)$, whenever we compute the 
mass and energy of the time-dependent solution $\psi(t,x)$ to \eqref{NLS_diss}.
In addition, the {\it free energy} is 
given by
\begin{equation}\label{free_energy}
F(\psi):=E(\psi) - \mu M(\psi).
\end{equation}

In the case of the usual Gross-Pitaevskii equation, i.e. $\vartheta = \pm \frac \pi 2$, one finds, that both $M(t)=M(0)$ and $E(t)=E(0)$ are conserved in time \cite{AMS}. 
In our dissipative model this is no longer the case. Instead we have the following result, which can be seen as an extension of 
some well-known identities proved for the classical GL equation, cf. \cite{DGL, GV2, Te, Wu}.

\begin{lemma}\label{lem:Lyapunov}
Let $\sigma <\frac{d}{2(d-2)}$ and $\psi \in C([0,T]; \Sigma)$ be a solution to \eqref{NLS_diss}. Then the following identities hold:
\begin{equation}\label{eq:mass}
M(t) + 2\cos \vartheta \int_0^t  \left( E(s) + \frac{\lambda\sigma}{\sigma+1} \| \psi(s,\cdot) \|_{L^{2\sigma+2}}^{2\sigma+2} - \mu M(s)\right)  ds =M(0),
\end{equation}
and
\begin{equation}\label{eq:energy}
F(t) + 2\cos \vartheta \int_0^t  \int_{\R^d} | \partial_t \psi(s,x)|^2\, dx \, ds= F(0).
\end{equation}
In particular, for $\vartheta \in (-\frac \pi  2 ,\frac \pi  2 )$, the free energy $F(\psi)$ is a non-increasing functional along solutions of \eqref{NLS_diss}.

\end{lemma}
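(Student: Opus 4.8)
The plan is to read \eqref{NLS_diss}, in the form $\partial_t\psi=-e^{-i\vartheta}\big(H_\Omega\psi+\lambda|\psi|^{2\sigma}\psi-\mu\psi\big)$, as a (complex, $\vartheta$-rotated) gradient flow for the free energy $F$, and to differentiate $M(t)$ and $F(t)$ directly in time. The mechanism behind both identities rests on three elementary facts: $H_\Omega$ is self-adjoint, so the quadratic form $E_0(\psi):=\langle\psi,H_\Omega\psi\rangle_{L^2}=\int_{\R^d}\big(\tfrac12|\nabla\psi|^2+V|\psi|^2-\Omega\bar\psi L\psi\big)\,dx$ is real; the terms $\lambda\|\psi\|_{L^{2\sigma+2}}^{2\sigma+2}$ and $\mu M$ are manifestly real; and $\re\,e^{-i\vartheta}=\cos\vartheta$. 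I will first perform the computations formally and deal with their justification at the end.

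For the mass identity I would compute
\[
\ddt M(t)=2\,\re\,\langle\psi,\partial_t\psi\rangle_{L^2}=-2\,\re\Big(e^{-i\vartheta}\big(E_0(\psi)+\lambda\|\psi\|_{L^{2\sigma+2}}^{2\sigma+2}-\mu M\big)\Big).
\]
Because the bracket is real this equals $-2\cos\vartheta\,\big(E_0+\lambda\|\psi\|_{L^{2\sigma+2}}^{2\sigma+2}-\mu M\big)$, and inserting $E_0=E-\tfrac{\lambda}{\sigma+1}\|\psi\|_{L^{2\sigma+2}}^{2\sigma+2}$ merges the two nonlinear contributions into $E+\tfrac{\lambda\sigma}{\sigma+1}\|\psi\|_{L^{2\sigma+2}}^{2\sigma+2}-\mu M$; integrating over $[0,t]$ then yields \eqref{eq:mass}.

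For the energy identity I would differentiate $F$ term by term, using that $\ddt\langle\psi,A\psi\rangle_{L^2}=2\,\re\,\langle A\psi,\partial_t\psi\rangle_{L^2}$ for self-adjoint $A$ and $\ddt\big(\tfrac{\lambda}{\sigma+1}\|\psi\|_{L^{2\sigma+2}}^{2\sigma+2}\big)=2\,\re\,\langle\lambda|\psi|^{2\sigma}\psi,\partial_t\psi\rangle_{L^2}$. Collecting the terms and recognizing $H_\Omega\psi+\lambda|\psi|^{2\sigma}\psi-\mu\psi=-e^{i\vartheta}\partial_t\psi$ gives
\[
\ddt F(t)=2\,\re\,\langle H_\Omega\psi+\lambda|\psi|^{2\sigma}\psi-\mu\psi,\ \partial_t\psi\rangle_{L^2}=2\,\re\,\langle -e^{i\vartheta}\partial_t\psi,\ \partial_t\psi\rangle_{L^2}=-2\cos\vartheta\,\|\partial_t\psi\|_{L^2}^2,
\]
and integration over $[0,t]$ produces \eqref{eq:energy}.

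The only genuine work is to justify these manipulations for a solution that a priori lies only in $C([0,T];\Sigma)$, where $\partial_t\psi$ need not be an $L^2$ function up to $t=0$. Here I would invoke the parabolic smoothing of $S_\Omega(t)$ from Lemma~\ref{lem:kernel}: since $S_\Omega$ is an analytic perturbation of the heat semigroup, a bootstrap on the Duhamel formula \eqref{mild_GP} upgrades the mild solution to a strong one on $(0,T)$, with $\psi(t)\in D(H_\Omega)$, $\psi\in C^1((0,T);L^2)$, and $\partial_t\psi(t)\in L^2(\R^d)$ continuous in $t>0$; on each $[\varepsilon,t]$ with $\varepsilon>0$ all the displayed steps, including the integration by parts defining $E_0$, are then rigorous. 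Letting $\varepsilon\to0^+$, continuity of $\psi$ in $\Sigma$ gives $M(\varepsilon)\to M(0)$ and $F(\varepsilon)\to F(0)$ (each term of $F$, the rotational one included, being continuous on $\Sigma$ by Cauchy--Schwarz and the embedding $\Sigma\hookrightarrow L^{2\sigma+2}$), while $\cos\vartheta>0$ together with the nonnegativity of $\|\partial_t\psi\|_{L^2}^2$ lets me pass to the limit in the energy identity by monotone convergence, simultaneously yielding $\int_0^t\|\partial_t\psi\|_{L^2}^2\,ds<\infty$. The main obstacle is therefore not the algebra but this smoothing-and-limiting step; should the bootstrap prove delicate, an alternative is to establish the identities first for smooth compactly supported data and then pass to the limit via the continuous dependence of Proposition~\ref{prop:loc_ex}.
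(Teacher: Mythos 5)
Your formal computations are exactly those of the paper: the mass identity comes from pairing the equation with $\bar\psi$ and taking real parts (equivalently, your observation that the bracket $E_0+\lambda\|\psi\|_{L^{2\sigma+2}}^{2\sigma+2}-\mu M$ is real so that only $\re\,e^{-i\vartheta}=\cos\vartheta$ survives), and the energy identity from pairing with $\partial_t\bar\psi$. The difference lies in how the computation is justified for a solution that is merely $C([0,T];\Sigma)$. The paper regularizes the \emph{data}: it performs the calculation for sufficiently smooth, decaying solutions and then extends to general ones by a density argument combined with continuous dependence on initial data from Proposition~\ref{prop:loc_ex} --- which is precisely the fallback you offer in your final sentence. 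Your primary route instead regularizes in \emph{time}, invoking the analyticity of $S_\Omega(t)$ to bootstrap the mild solution to a strong one on $(0,T)$ and then letting $\varepsilon\to0^+$ via continuity of $M$ and $F$ on $\Sigma$ and monotone convergence for the dissipation integral. Both are legitimate, and each has a genuine payoff: the density argument must confront the fact that $\int_0^t\|\partial_t\psi_n\|_{L^2}^2\,ds$ is a priori only lower semicontinuous under the approximation (the paper defers this to the cited reference), whereas your smoothing argument works with the actual solution and, as a bonus, would give $\partial_t\psi\in C((0,T);L^2)$ --- information the paper explicitly says it lacks, and whose absence is what forces the elaborate weak-formulation detour in the proof of Proposition~\ref{prop:Ebound}. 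On the other hand, the paper states (in the remark after Proposition~\ref{prop:global}) that it deliberately avoids using the parabolic smoothing of $S_\Omega(t)$ so as to stay close to the NLS theory, so your main route, while valid, runs against the paper's stated design, and the bootstrap itself (in particular identifying $D(H_\Omega)$ with the quadratic potential and rotation present) is the step you would actually have to carry out in detail.
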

\begin{proof} In a first step, let us assume sufficient regularity (and spatial decay) of $\psi$, such that all the following calculations are justified. Then, 
as in the case of the usual NLS, 
identity \eqref{eq:mass} is obtained by multiplying \eqref{NLS_diss} by $\bar \psi$, integrating with respect to $x\in \R^d$ and taking the real part of the resulting expression (see, e.g., \cite{AMS, Caze}). 
This yields
\begin{equation}\label{mass-derivative}
\frac{d}{dt} M(t) = -2\cos \vartheta \left( E(t) + \frac{\lambda\sigma}{\sigma+1} \| \psi(t) \|_{L^{2\sigma+2}}^{2\sigma+2} - \mu M(t)
\right)
\end{equation}
which directly implies \eqref{eq:mass} after an integration in time. Similarly, after multiplying \eqref{NLS_diss} by $\partial_t \bar \psi$, 
integrating with respect to $x$, and taking the real part, we obtain
\begin{equation}\label{energy-derivative}
 \frac{d}{dt} \big(E(t) - \mu M(t)\big) = -2\cos \vartheta \int_{\R^d} | \partial_t \psi(t,x)|^2\, dx,
\end{equation}
which yields \eqref{eq:energy} after integration w.r.t. time. 

The second step then consists of a classical density argument (cf. \cite{CDW1}), which, together with the fact that 
$\psi(t)$ depends continuously on the initial data $\psi_0\in \Sigma$, 
allows us to extend \eqref{eq:mass} and \eqref{eq:energy} to the case of general solutions $\psi \in C([0,T;\Sigma)$. 
Finally, we note that for $\vartheta \in (-\frac \pi  2 ,\frac \pi  2 )$ we have $\cos \vartheta >0$, and thus 
\eqref{eq:energy} directly implies that $F(t) \le F(0)$, for all $t\ge 0$.
\end{proof}

Having in mind that $\psi \in C([0,T], \Sigma)$ the assumption on $\sigma$ implies (via Sobolev imbedding) that the integrand 
appearing in identity \eqref{eq:mass} is a continuous function of time. 
The fundamental theorem of calculus therefore allows us to differentiate \eqref{eq:mass} w.r.t. $t$ and consequently use the 
differential inequality \eqref{mass-derivative}. However, the same is not true for \eqref{eq:energy}, i.e., we cannot use \eqref{energy-derivative}, 
since at this point we do not know wether $\partial_t \psi  \in C([0,T; L^2(\R^d))$ holds true. This fact will play a role in some of the proofs given below. \\

Another preliminary result, to be used several times in the following, is the fact that under our assumptions on the parameters $\omega, \Omega, \lambda, \sigma$, 
the energy is indeed non-negative.

\begin{lemma} \label{lem:Ebound}
Let $\omega>|\Omega|$, $\lambda \ge 0$, and $\sigma < \frac{2}{d-2}$. Then for any 
$u \in \Sigma$ there exists a constant $c=c(\omega, \Omega, \lambda , \sigma)>0$,  such that
such that 
\[
\|\nabla u \|_{L^2}^2 + \| x u \|_{L^2}^2  + \| u \|_{L^{2\sigma +2}}^{2\sigma+2} \le cE(u).
\]
\end{lemma}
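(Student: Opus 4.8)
The plan is to show that the energy $E(u)$ is coercive, i.e.\ that it controls each of the three terms on the left-hand side. Writing out \eqref{energy} with $V(x)=\frac12\omega^2|x|^2$,
\[
E(u)=\frac12\|\nabla u\|_{L^2}^2+\frac{\omega^2}{2}\|x u\|_{L^2}^2+\frac{\lambda}{\sigma+1}\|u\|_{L^{2\sigma+2}}^{2\sigma+2}-\Omega\,\re\int_{\R^d}\bar u\,Lu\,dx,
\]
one sees that the only term without a definite sign is the rotational energy, and taming it is the crux of the argument.

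First I would estimate the rotation term. Since $Lu=-i\,x^\perp\cdot\nabla u$ with $|x^\perp|\le|x|$ in both $d=2$ and $d=3$, the Cauchy--Schwarz inequality gives
\[
\Big|\int_{\R^d}\bar u\,Lu\,dx\Big|\le\int_{\R^d}|u|\,|x|\,|\nabla u|\,dx\le\|x u\|_{L^2}\|\nabla u\|_{L^2},
\]
and hence $|\Omega\,\re\int_{\R^d}\bar u\,Lu\,dx|\le|\Omega|\,\|x u\|_{L^2}\|\nabla u\|_{L^2}$.

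Next, writing $a=\|\nabla u\|_{L^2}$ and $b=\|x u\|_{L^2}$, the problem reduces to showing that the quadratic form $\frac12 a^2+\frac{\omega^2}{2}b^2-|\Omega|\,ab$ is positive definite, and this is exactly where the standing hypothesis $\omega>|\Omega|$ enters. Applying Young's inequality $|\Omega|\,ab\le\frac{|\Omega|}{2}(\epsilon a^2+\epsilon^{-1}b^2)$ with a weight $\epsilon\in(|\Omega|/\omega^2,\,1/|\Omega|)$ (for instance $\epsilon=1/\omega$) produces a constant $c_0=c_0(\omega,\Omega)>0$ such that
\[
\tfrac12 a^2+\tfrac{\omega^2}{2}b^2-|\Omega|\,ab\ge c_0\,(a^2+b^2).
\]
Because $\lambda\ge0$ makes the nonlinear term nonnegative, combining the above yields the key coercivity estimate
\[
E(u)\ge c_0\big(\|\nabla u\|_{L^2}^2+\|x u\|_{L^2}^2\big)+\frac{\lambda}{\sigma+1}\|u\|_{L^{2\sigma+2}}^{2\sigma+2}.
\]
From here the first two terms on the left of the lemma are bounded by $c_0^{-1}E(u)$ and the third by $\frac{\sigma+1}{\lambda}E(u)$; summing gives the claim with $c=c_0^{-1}+(\sigma+1)/\lambda$.

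I expect no deep obstacle once the rotation term is controlled: the real content is the positive-definiteness of the quadratic form, for which the hypothesis $\omega>|\Omega|$ is precisely what is needed, and one sees quantitatively that $c_0\to0$ as $|\Omega|\to\omega$, consistent with the later observation that the attractor's dimension diverges in that limit. One point worth flagging is that extracting the $L^{2\sigma+2}$ term genuinely uses $\lambda>0$: testing the inequality on $u\mapsto s u$ shows that for $\lambda=0$ the left-hand side grows like $s^{2\sigma+2}$ while $E(su)$ grows only like $s^2$, so the constant $c$ must depend on $\lambda$ and blow up as $\lambda\to0^+$.
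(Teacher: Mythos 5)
Your proof is correct and follows essentially the same route as the paper's: the rotational energy is the only indefinite term, and it is absorbed using $\omega>|\Omega|$ via Cauchy--Schwarz/Young (the paper does this pointwise, writing $|\Omega\,\overline{u}\,Lu|\le V(x)|u|^2+\tfrac{\Omega^2}{2\omega^2}|\nabla u|^2$ and its symmetric counterpart, whereas you apply it to the integrated norms --- an immaterial difference). Your closing remark that the constant necessarily degenerates as $\lambda\to0^+$ is accurate and matches the paper, whose constant $c=4/\min\{1-\Omega^2/\omega^2,\,2\lambda\sigma/(\sigma+1)\}$ blows up in the same limit.
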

\begin{proof}
Since $\lambda\ge0$, the only possibly negative term within $E(u)$ is given by the rotational energy. However, since 
$\Omega^2/\omega^2=:\epsilon < 1$, Young's inequality applied to~\eqref{eq:angular_momentum} yields the pointwise interpolation estimate
\[
\big| \Omega \overline{u} L u \big| \le \frac{\omega^2}{2}  |x^\bot|^2 | u |^2 \, dx +\frac{\epsilon}{2} |\nabla^\bot u|^2 \le V(x) |u|^2 + \frac{\epsilon}{2} |\nabla u |^2.
\]
We therefore can bound the energy from below via
\begin{equation*}\label{grad_by_en}
 0 \le \frac{1-\epsilon}{2} \|\nabla u \|_{L^2}^2 +  \frac{\lambda\sigma}{\sigma+1} \| u \|_{L^{2\sigma+2}}^{2\sigma+2} \le E(u).
\end{equation*}
Analogously, we have
\[
 0 \le \frac{1-\epsilon}{2} \|x u \|_{L^2}^2+  \frac{\lambda\sigma}{\sigma+1} \| u \|_{L^{2\sigma+2}}^{2\sigma+2} \le E(u).
\]
Combining these two estimates then yields the desired result with a constant
\[
c=\frac{4}{\min\{1-\epsilon, \frac{2\lambda \sigma}{\sigma +1}\}}.
\]
Note that  $c\to +\infty$ as $|\Omega|\to \omega$.
\end{proof}

The mass/energy-relations stated in Lemma \ref{lem:Lyapunov} can now be used to infer global existence of solutions in the 
case of {\it defocusing} case $\lambda > 0$.

\begin{proposition} \label{prop:global} Let $\omega>|\Omega|$, $\vartheta \in (-\frac \pi  2 , \frac \pi  2 )$, $\lambda \ge 0$, and $\sigma < \frac{d}{2(d-2)}$. 
Then, for any $\psi_0 \in
\Sigma$
there exists a unique global-in-time solution $ \psi \in C([0,\infty); \Sigma)$ to \eqref{NLS_diss}.
\end{proposition}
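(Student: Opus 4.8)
The plan is to upgrade the local solution from Proposition \ref{prop:loc_ex}(ii) to a global one by ruling out the blow-up alternative $\lim_{t\to T^*_-}\|\psi(t)\|_\Sigma = \infty$. The strategy is the standard one for global existence via an a-priori bound: if I can show that $\|\psi(t)\|_\Sigma$ stays finite on every finite time interval $[0,T^*)$, then the blow-up alternative forces $T^*=+\infty$, and uniqueness is already inherited from the contraction argument in Proposition \ref{prop:loc_ex}. So the entire task reduces to producing an a-priori $\Sigma$-bound that does not degenerate as $t\to T^*_-$.

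First I would extract control of the kinetic, potential, and $L^{2\sigma+2}$ norms from the energy. The key input is the energy identity \eqref{eq:energy}, which for $\vartheta\in(-\frac\pi2,\frac\pi2)$ gives $\cos\vartheta>0$ and hence $F(t)\le F(0)$ for all $t$; that is, the free energy $F(\psi)=E(\psi)-\mu M(\psi)$ is non-increasing. Combined with Lemma \ref{lem:Ebound}, which (for $\omega>|\Omega|$, $\lambda\ge0$, $\sigma<\frac{2}{d-2}$) yields
\[
\|\nabla\psi(t)\|_{L^2}^2 + \|x\psi(t)\|_{L^2}^2 + \|\psi(t)\|_{L^{2\sigma+2}}^{2\sigma+2} \le c\,E(t),
\]
I see that bounding the full energy $E(t)$ is equivalent to bounding the $\Sigma$-seminorm together with the $L^{2\sigma+2}$-norm. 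Since $E(t)=F(t)+\mu M(t)\le F(0)+\mu M(t)$, the only remaining obstruction is to control the mass $M(t)$: once $M(t)$ is bounded on $[0,T^*)$, the energy is bounded, hence (by Lemma \ref{lem:Ebound}) so are $\|\nabla\psi\|_{L^2}$ and $\|x\psi\|_{L^2}$, and therefore $\|\psi(t)\|_\Sigma$ stays finite, contradicting blow-up.

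The main obstacle is thus controlling the mass, and here I would use the mass differential inequality \eqref{mass-derivative},
\[
\ddt M(t) = -2\cos\vartheta\Big(E(t) + \tfrac{\lambda\sigma}{\sigma+1}\|\psi(t)\|_{L^{2\sigma+2}}^{2\sigma+2} - \mu M(t)\Big),
\]
which is legitimate to differentiate because, as noted after Lemma \ref{lem:Lyapunov}, the integrand in \eqref{eq:mass} is continuous in time. The delicate point is the sign of the right-hand side: the troublesome term is $+2\cos\vartheta\,\mu M(t)$, which for $\mu>0$ drives the mass up, while the stabilizing terms $-2\cos\vartheta\,E(t)$ and the nonlinear term have favorable sign (recall $E(t)\ge0$ by Lemma \ref{lem:Ebound}). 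I would absorb $\mu M(t)$ against the nonlinear dissipation: by Lemma \ref{lem:Ebound} and interpolation/Young's inequality, $M(t)=\|\psi\|_{L^2}^2$ can be bounded by a small multiple of $\|\psi\|_{L^{2\sigma+2}}^{2\sigma+2}$ plus a constant (since the $L^{2\sigma+2}$-norm dominates the $L^2$-norm up to lower-order terms when $\lambda>0$). This should yield a differential inequality of the form $\ddt M(t)\le C - \delta M(t)$ for positive constants, whence $M(t)\le \max(M(0),C/\delta)$ for all $t$ by a Gronwall/comparison argument. This uniform-in-time mass bound then closes the loop and establishes that the solution cannot blow up in finite time, giving global existence in $C([0,\infty);\Sigma)$.
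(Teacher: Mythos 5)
Your overall architecture (rule out the blow-up alternative by bounding $M(t)$ and $E(t)$, using $F(t)\le F(0)$ and Lemma \ref{lem:Ebound} to reduce everything to a mass bound) matches the paper's proof. However, the step where you close the argument contains a genuine gap. You claim that $M(t)=\|\psi\|_{L^2}^2$ can be absorbed as a small multiple of $\|\psi\|_{L^{2\sigma+2}}^{2\sigma+2}$ plus a constant. On $\R^d$ this is false: no inequality of the form $\|f\|_{L^2}^2\le \varepsilon\|f\|_{L^{2\sigma+2}}^{2\sigma+2}+C$ can hold uniformly, as the rescaling $f_R(x)=R^{-d/2}f(x/R)$ keeps $\|f_R\|_{L^2}$ fixed while $\|f_R\|_{L^{2\sigma+2}}^{2\sigma+2}=R^{-d\sigma}\|f\|_{L^{2\sigma+2}}^{2\sigma+2}\to 0$. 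Controlling $L^2$ by $L^{2\sigma+2}$ requires the confinement, i.e.\ the weighted norm $\|x\psi\|_{L^2}$, via the interpolation $\|f\|_{L^2}\le 2\|xf\|_{L^2}^{\theta}\|f\|_{L^{2\sigma+2}}^{1-\theta}$ — this is exactly Lemma \ref{lem:glob_bound} of the paper, which yields $M(\psi)\le CE(\psi)^{(\sigma\theta+1)/(\sigma+1)}$ and is the engine behind the \emph{uniform} bounds of Section \ref{sec:uniform}. Moreover, that route requires $\lambda>0$ (you say so yourself in parentheses), whereas the proposition allows $\lambda\ge 0$; your argument as written does not cover $\lambda=0$.

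The second point is that you are proving more than you need, and thereby making the proof both harder and narrower. For global existence it suffices that $M(t)$ be finite on every finite interval; it need not be bounded uniformly in $t$. The paper simply drops the non-negative terms $E(s)$ and $\frac{\lambda\sigma}{\sigma+1}\|\psi\|_{L^{2\sigma+2}}^{2\sigma+2}$ from \eqref{eq:mass} to get $M(t)\le M(0)+2\mu\cos\vartheta\int_0^t M(s)\,ds$ when $\mu\ge 0$, applies Gronwall to obtain an exponentially growing (but finite) bound \eqref{Mbound}, and then feeds this into $E(t)\le F(0)+\mu M(t)$; the case $\mu<0$ is immediate from $E(t)+|\mu|M(t)\le F(0)$. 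This works for all $\lambda\ge 0$ and avoids any absorption argument. The uniform-in-time bound you are reaching for is the content of Proposition \ref{prop:Ebound}, proved later and only under the additional hypothesis $\lambda>0$.
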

\begin{proof}
In view of the blow-up alternative stated in Proposition \ref{prop:loc_ex}, all we need to show is that the $\Sigma$-norm remains bounded for all $t\ge 0$. 
Lemma \ref{lem:Ebound} implies that this is the case, as soon as we we can show that both $M(t)$ and $E(t)$ are bounded. 
In order to do so, we first consider the case $\mu <0$ and recall that $\cos \vartheta >0$ for $\vartheta \in (-\frac \pi  2 , \frac \pi  2 )$. In this case 
identity \eqref{eq:energy} implies
\[
E(t) + |\mu| M(t) \le F(0) <+\infty,
\]
and since both $E(t)$ and $M(t)$ are non-negative, we directly infer the required bound on the mass and energy.

On the other hand, for $\mu \ge 0$, identity \eqref{eq:mass} yields (since $\lambda \ge 0$)
\[
M(t) \le M(0) + 2 \mu \cos \vartheta \int_0^t M(s) \, ds,
\]
and hence, Grownwall's lemma implies
\begin{equation}\label{Mbound}
M(t) \le M(0)\left( 1+ 2 \mu t \cos \vartheta \, e^{2\mu t \cos \vartheta } \right).
\end{equation}
Using this estimate in identity \eqref{eq:energy} we obtain
\[
 E(t)  \le F(0)  + \mu M(t) \le E(0)  + 2 \mu^2 t \cos \vartheta  M(0)  e^{2\mu t \cos \vartheta }  .
\]
The right hand side is finite, for all $t\ge0$ and thus, the assertion is proved.
\end{proof}

\begin{remark}
The global in-time strong solutions constructed above are of the same type as the corresponding solutions for NLS with quadratic potentials, cf. \cite{AMS, Car05}. 
It is certainly possible to, alternatively, construct global weak solutions to \eqref{NLS_diss} as has been done for the usual GL model in, e.g., \cite{DGL, GV1}. 
But since we consider the equation \eqref{NLS_diss} as a 
toy model describing possible relaxation phenomena in the mean-field dynamics of BEC, we have decided to remain as close as possible to the corresponding NLS theory. 
In particular, we do not make any use of the strong smoothing property of the linear (heat type) semigroup $S_\Omega(t)$ for $\vartheta \in (-\frac{\pi}{2}, \frac \pi 2)$. 
We finally note that our set-up makes it possible to directly generalize the inviscid limit results of \cite{Wu} to our model.
\end{remark}

\subsection{Asymptotically vanishing solutions}

The discussion in Section \ref{sec:linear} shows that solutions to the linear time evolution $\lambda =0$ asymptotically vanish, provided 
$\mu < E_0$, i.e., the lowest (positive) energy eigenvalue of $H_\Omega$. 
We shall prove that the same is true for in the nonlinear case $\lambda >0$.

\begin{proposition} Let $\vartheta \in (-\frac \pi  2 , \frac \pi  2 )$, $\lambda \ge 0$, $\omega>|\Omega|$, and $\psi \in C([0,\infty), \Sigma)$ be a solution of \eqref{NLS_diss} with 
$\mu<E_{\Omega, 1}=\frac{\omega d}{2}$. Then
\[
\lim_{t\to +\infty} \| \psi(t)\|_{L^2} = 0,
\]
exponentially fast.
\end{proposition}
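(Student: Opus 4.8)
The plan is to show that the mass $M(t) = \|\psi(t)\|_{L^2}^2$ satisfies a differential inequality forcing exponential decay when $\mu < E_{\Omega,1} = \frac{\omega d}{2}$. My starting point is the mass identity \eqref{eq:mass}, which (via the fundamental theorem of calculus, as the integrand is continuous in time by Sobolev embedding) yields the differential form \eqref{mass-derivative}:
\[
\frac{d}{dt} M(t) = -2\cos\vartheta\left( E(t) + \frac{\lambda\sigma}{\sigma+1}\|\psi(t)\|_{L^{2\sigma+2}}^{2\sigma+2} - \mu M(t)\right).
\]
Since $\lambda \ge 0$ and $\cos\vartheta > 0$, the nonlinear term only helps, so I may discard it and write
\[
\frac{d}{dt} M(t) \le -2\cos\vartheta\left( E(t) - \mu M(t)\right).
\]
The crux is therefore to bound the free-energy-type expression $E(t) - \mu M(t)$ from below by a positive multiple of $M(t)$.

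First I would establish the key coercivity estimate: under the assumption $\omega > |\Omega|$, the quadratic part of the energy dominates the mass. Concretely, using the bound from Lemma \ref{lem:Ebound} (or rather its proof) one sees that the purely quadratic part of $E$, namely $\frac{1}{2}\|\nabla u\|_{L^2}^2 + \|xu\|_{L^2}^2 - \Omega\,\overline{u}Lu$ integrated, is the quadratic form associated to the operator $H_\Omega$. By the spectral analysis of Section \ref{sec:linear}, this form is bounded below by $E_{\Omega,1}\|u\|_{L^2}^2 = \frac{\omega d}{2}\|u\|_{L^2}^2$, since $E_{\Omega,1}$ is the bottom of the spectrum of $H_\Omega$. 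Thus, dropping the nonnegative nonlinear potential term in $E$,
\[
E(t) \ge E_{\Omega,1}\, M(t) = \frac{\omega d}{2}\, M(t).
\]
Combining this with the differential inequality gives
\[
\frac{d}{dt} M(t) \le -2\cos\vartheta\left( E_{\Omega,1} - \mu\right) M(t),
\]
and since $\mu < E_{\Omega,1}$ the constant $\kappa := 2\cos\vartheta\,(E_{\Omega,1}-\mu)$ is strictly positive.

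Finally, Gronwall's lemma yields $M(t) \le M(0)\,e^{-\kappa t}$, so that $\|\psi(t)\|_{L^2} \le \|\psi_0\|_{L^2}\,e^{-\kappa t/2} \to 0$ as $t\to+\infty$, which is exactly the claimed exponential decay. The main obstacle I anticipate is rigorously justifying the spectral lower bound $E(t) \ge E_{\Omega,1} M(t)$ at the level of the form domain $\Sigma$; one must verify that the quadratic form $\langle u, H_\Omega u\rangle$ is well-defined and coercive on $\Sigma$ with the stated lower bound, which relies on the spectral data \eqref{eigenvalue} together with the fact that $\{\chi_n\}$ form an orthonormal basis. Once this form inequality is in hand, the rest of the argument is a routine Gronwall estimate, and the positivity of $\cos\vartheta$ for $\vartheta \in (-\frac{\pi}{2}, \frac{\pi}{2})$ together with the strict inequality $\mu < E_{\Omega,1}$ guarantees the exponential rate.
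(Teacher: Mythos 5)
Your proof is correct and follows essentially the same route as the paper: both start from the differential inequality \eqref{mass-derivative}, discard the nonnegative nonlinear term, bound the quadratic part of $E(t)$ below by $E_{\Omega,1}M(t)$ via the spectral decomposition of $H_\Omega$ (the paper does this explicitly by expanding $\psi$ in the eigenbasis \eqref{decomp}), and conclude by Gronwall. The form inequality you flag as the main obstacle is exactly the step the paper carries out with the eigenfunction expansion, so there is no gap.
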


\begin{proof} For solutions $\psi \in C([0,\infty), \Sigma)$ we are allowed to use the differential inequality \eqref{mass-derivative}, which together with the fact that $\lambda \ge0$ implies
\[
\frac{d}{dt} M(t) \le -2\cos \vartheta \left( E(t)  - \mu M(t)\right).
\]
Decomposing $\psi(t,x)$ in the form \eqref{decomp}, and dropping the nonlinear term $\propto \lambda$ within $E(t)$, 
then allows us to rewrite this inequality as
\[
\frac{d}{dt} M(t) \le -2\cos \vartheta \sum_{n=0}^\infty (E_{\Omega,n}-\mu) |c_n(t)|^2 \le -2\cos \vartheta (E_{\Omega,0} - \mu)M(t),
\]
since $E_{\Omega,n}-\mu \ge E_{\Omega, 0} - \mu >0$, and $M(t) =  \sum_{n=0}^\infty  |c_n(t)|^2$.
The inequality above can thus be rewritten as
\[
\frac{d}{dt} \left(e^{+2 t\cos \vartheta (E_{\Omega,0} - \mu) } M(t) \right)  \le 0,
\]
which after an integration in time implies 
\[
M(t)\le  M(0) e^{-2 t\cos \vartheta (E_{\Omega,0} - \mu) } \xrightarrow{t\to +\infty}0,
\]
since $\vartheta \in (-\frac \pi  2 , \frac \pi  2 )$. 
\end{proof}

At this point, it is unclear if the decay rate given above is indeed sharp.

\begin{remark} In the case where $\mu <0$, one does not need to use the decomposition of $\psi$ via the spectral subspaces of $H_\Omega$, at the expense of a slightly worse decay rate. 
Indeed, for $\mu<0$, the inequality \eqref{mass-derivative} directly yields
\[
\frac{d}{dt} M(t) \le - 2 |\mu| \cos \vartheta M(t),
\]
and thus
\[
 M(t) \le M(0) e^{-2 t|\mu| \cos \vartheta} , \quad \forall t \ge 0.
\]
Note that for $\mu <0$ there are no nontrivial steady states $\varphi(x)\not =0$, satisfying \eqref{stat_NLS}. This can be seen by 
multiplying equation \eqref{stat_NLS} with $\bar \varphi$, integrating in $x\in \R^d$, 
and recalling the restriction $\omega>\Omega\ge0$, which implies that $\mu$ has to be non-negative.
\end{remark}

\section{Bounds on the mass and energy}\label{sec:uniform}

In this section we shall prove the existence of absorbing balls in both $L^2(\R^d)$ and $\Sigma$ for solutions to \eqref{NLS_diss}.
In view of the discussion on the linear model, cf. Section \ref{sec:linear}, this might seem surprising, given that for general $\mu>0$ we can expect exponentially growing modes. 
However, we shall see that for $\lambda>0$, the nonlinearity, combined with the confining potential, mixes the dynamics in a way that 
makes it possible to infer a uniform bound on the mass and energy, and hence on the $\Sigma$--norm of the solution.
To this end, the following lemma is the key technical step.

\begin{lemma}\label{lem:glob_bound}
Let $\lambda > 0$, $\omega > |\Omega|$ and $0 < \sigma < \frac{d}{2(d-2)}$. Then there exists a constant $C=C(\omega, \Omega, \lambda, \sigma)>0$, such that
\[
 M(\psi) \le C E(\psi)^{\frac{\sigma\theta + 1}{\sigma+1}} ,\quad \text{with $\theta = \frac{d\sigma}{2\sigma + 2 + d\sigma}$.}
 \]
\end{lemma}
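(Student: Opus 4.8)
The plan is to bound the mass $M(\psi)=\|\psi\|_{L^2}^2$ by splitting the spatial integral at a radius $R>0$ to be optimized at the end, controlling the exterior region by the confinement weight $\|x\psi\|_{L^2}$ and the interior region by the nonlinear term $\|\psi\|_{L^{2\sigma+2}}$, both of which are dominated by $E(\psi)$ thanks to Lemma \ref{lem:Ebound}. Since $\lambda>0$, that lemma supplies a constant $c=c(\omega,\Omega,\lambda,\sigma)>0$ with
\[
\|x\psi\|_{L^2}^2 \le c\,E(\psi), \qquad \|\psi\|_{L^{2\sigma+2}}^{2\sigma+2} \le c\,E(\psi).
\]
The strict positivity of $\lambda$ is essential here, as the constant in Lemma \ref{lem:Ebound} degenerates as $\lambda\to 0$; this is why the hypothesis excludes the linear case. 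Note that the gradient term is not needed for the argument.

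First I would write, for any $R>0$,
\[
M(\psi) = \int_{|x|\le R}|\psi|^2\,dx + \int_{|x|>R}|\psi|^2\,dx .
\]
On the exterior region I would use $|x|>R$ to gain a factor $R^{-2}$, so that $\int_{|x|>R}|\psi|^2\,dx \le R^{-2}\|x\psi\|_{L^2}^2 \le c\,R^{-2}E(\psi)$. On the interior region I would apply H\"older's inequality with conjugate exponents $\sigma+1$ and $\frac{\sigma+1}{\sigma}$, giving
\[
\int_{|x|\le R}|\psi|^2\,dx \le |B_R|^{\frac{\sigma}{\sigma+1}}\,\|\psi\|_{L^{2\sigma+2}}^2 \le c\,R^{\frac{d\sigma}{\sigma+1}}\,E(\psi)^{\frac{1}{\sigma+1}},
\]
where I used $|B_R|=c_d R^d$ and $\|\psi\|_{L^{2\sigma+2}}^2=\big(\|\psi\|_{L^{2\sigma+2}}^{2\sigma+2}\big)^{1/(\sigma+1)}\le\big(cE(\psi)\big)^{1/(\sigma+1)}$. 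Combining the two estimates produces the two-term bound
\[
M(\psi) \le c\,R^{-2}E(\psi) + c\,R^{\frac{d\sigma}{\sigma+1}}E(\psi)^{\frac{1}{\sigma+1}} .
\]

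The final step is to optimize in $R$. Setting $a=\frac{d\sigma}{\sigma+1}$ and minimizing the map $R\mapsto \alpha R^{-2}+\beta R^{a}$ with $\alpha=c\,E(\psi)$ and $\beta=c\,E(\psi)^{1/(\sigma+1)}$, the optimal radius $R\sim(\alpha/\beta)^{1/(a+2)}$ yields $M(\psi)\lesssim \alpha^{\frac{a}{a+2}}\beta^{\frac{2}{a+2}}$. A direct computation gives $\frac{a}{a+2}=\theta$ and $a+2=\frac{d\sigma+2\sigma+2}{\sigma+1}$, so that the resulting power of $E(\psi)$ is $\theta+\frac{2}{d\sigma+2\sigma+2}=\frac{d\sigma+2}{d\sigma+2\sigma+2}$; factoring $d\sigma^2+d\sigma+2\sigma+2=(\sigma+1)(d\sigma+2)$ shows this equals exactly $\frac{\sigma\theta+1}{\sigma+1}$, as claimed. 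The main (and essentially only nontrivial) point is recognizing the correct splitting: the confinement norm $\|x\psi\|_{L^2}$ handles the tail while the defocusing nonlinearity controls the core, and it is the interplay of these two bounds—rather than the kinetic term—that forces the exponent $\frac{\sigma\theta+1}{\sigma+1}<1$ that will be needed for the absorbing-ball argument. Everything else is bookkeeping of exponents.
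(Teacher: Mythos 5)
Your proof is correct and follows essentially the same route as the paper: the paper first establishes the interpolation inequality $\|f\|_{L^2}\le 2\|xf\|_{L^2}^{\theta}\|f\|_{L^p}^{1-\theta}$ by exactly your splitting of the integral at a radius $r$ (H\"older on the ball, the weight $|x|$ outside) followed by optimization in $r$, and then specializes to $p=2\sigma+2$ and invokes Lemma \ref{lem:Ebound} to bound both factors by the energy. Your exponent bookkeeping, including the identity $\frac{d\sigma+2}{d\sigma+2\sigma+2}=\frac{\sigma\theta+1}{\sigma+1}$ and the observation that the constant degenerates as $\lambda\to 0$, matches the paper's.
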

\begin{proof}
The proof of this result relies on the following localization property: 
 For all $d\ge1$ and all $p\ge2$ and any $f\in C^\infty_0(\R^d)$:
\begin{equation}\label{localize}
 \| f \|_{L^2(\R^d)} \le { 2} \| x f \|_{L^2(\R^d)}^{\theta} \| f \|_{L^p(\R^d)}^{1-\theta},
\end{equation}
with 
\[
\theta = \frac{d(\frac{1}{2}-\frac{1}{p})}{1+d(\frac{1}{2}-\frac{1}{p})} = \frac{d(p-2)}{2p + d(p-2)}.
\]
In order to show this, let $B_R$ denote the ball around the origin of radius $r>0$. We rewrite 
\begin{align*}
 \|f\|_{L^2(\R^d)} &= \|f\|_{L^2(B_r)} + \|f\|_{L^2(\R^d\setminus B_r)} \le r^{d(\frac{1}{2} - \frac{1}{p})} \|f\|_{L^p(B_r)} + \frac{1}{r} \|x f\|_{L^2(\R^d\setminus B_r)}\\
 &\le r^{d(\frac{1}{2} - \frac{1}{p})} \|f\|_{L^p(\R^d)} + \frac{1}{r} \|x f\|_{L^2(\R^d)}.
\end{align*}
The right-hand side is minimal if both summands are of the same order of magnitude, i.e.
\[
 r^{1 + d(\frac{1}{2} - \frac{1}{p})} = \frac{\|x f\|_{L^2(\R^d)}}{\|f\|_{L^p(\R^d)}}.
\]
With this choice of $r$, the estimate \eqref{localize} follows and a density argument allows to extend it to any $f\in \Sigma$. 
Specifying $p=2\sigma+2$, consequently yields
\begin{equation}\label{proof_glob_bound}
\|\psi \|_{L^2}^2 \le { 2} \bigg( \int_{\R^d} |x|^2 |\psi(x)|^2 \;dx \bigg)^{\theta} \bigg( \int_{\R^d} |\psi(x)|^{2\sigma+2} \;dx \bigg)^{\frac{1-\theta}{\sigma+1}},
\end{equation}
where $\theta = \frac{d\sigma}{2\sigma + 2 + d\sigma}.$ 
In view of Lemma \ref{lem:Ebound}, both factors on the right hand side of \eqref{proof_glob_bound} are bounded by the energy. 
More precisely, 
\[
 M(\psi) \le 2(c E(\psi))^{\theta + \frac{1-\theta}{\sigma+1}} = C E(\psi)^{\frac{\sigma\theta+1}{\sigma+1}},
\]
where $C=2c^{\frac{\sigma\theta+1}{\sigma+1}}$ and $c=c(\omega, \Omega, \lambda, \sigma)>0$ is the constant from Lemma~\ref{lem:Ebound}. 
\end{proof}
\begin{remark}
Note that in order to infer this bound one needs 
the presence of {\it both} the confinement and the nonlinearity, since the proof requires $\sigma>0$, $\lambda>0$ and $\omega>0$. Moreover, one checks that $C\to +\infty$, as $|\Omega| \to \omega$.
\end{remark}

With this result in hand, we can deduce global bounds on $M(t)$ and $E(t)$ along solutions of \eqref{NLS_diss}.
\begin{proposition}\label{prop:Ebound}
 Let $\psi \in C([0,\infty), \Sigma)$ be a solution to \eqref{NLS_diss} with $\vartheta \in (-\frac \pi  2 ,\frac \pi  2 )$.
 Under the assumptions of Lemma \ref{lem:glob_bound}, if additionally $\mu>0$, there exists a constant $K=K(\omega, \Omega, \sigma, \lambda, \mu)>0$, independent of time, 
 such that
\[
 E(t) \le K + e^{- t \mu  \cos \vartheta} E(0) , \quad \forall \, t\ge 0.
\]
\end{proposition}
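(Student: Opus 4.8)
The plan is to close a differential inequality for the energy $E(t)$ by itself and then conclude with Gronwall's lemma. The obstruction noted after Lemma~\ref{lem:Lyapunov} is that we are not allowed to differentiate the energy identity \eqref{eq:energy} in time, since it is not known whether $\partial_t \psi \in C([0,T];L^2(\R^d))$. I would bypass this by exploiting only the \emph{monotonicity} of the free energy $F(t)=E(t)-\mu M(t)$: identity \eqref{eq:energy} exhibits $F$ as $F(0)$ minus $2\cos\vartheta\int_0^t \|\partial_t\psi\|_{L^2}^2\,ds$, so $F$ is absolutely continuous and non-increasing, whence $\frac{d}{dt}F\le 0$ almost everywhere. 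The mass identity \eqref{eq:mass}, by contrast, has a continuous integrand and may be differentiated, giving \eqref{mass-derivative}. Writing $E=F+\mu M$ and using $\frac{d}{dt}F\le 0$ together with \eqref{mass-derivative} and $\cos\vartheta>0$, I expect the almost-everywhere inequality
\[
\frac{d}{dt}E \le \mu\,\frac{d}{dt}M = -2\mu\cos\vartheta\Big(E + \tfrac{\lambda\sigma}{\sigma+1}\|\psi\|_{L^{2\sigma+2}}^{2\sigma+2} - \mu M\Big) \le -2\mu\cos\vartheta\big(E-\mu M\big),
\]
where the last step discards the non-negative $L^{2\sigma+2}$-term.

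The second step is to absorb the remaining mass term using Lemma~\ref{lem:glob_bound}. A short computation shows that its exponent equals
\[
\frac{\sigma\theta+1}{\sigma+1}=\frac{d\sigma+2}{d\sigma+2\sigma+2}=:\alpha,
\]
and that $\alpha<1$ precisely because $\sigma>0$; thus $M\le C E^{\alpha}$ with $\alpha<1$. Since $\alpha<1$, Young's inequality provides a constant $C'=C'(\omega,\Omega,\sigma,\lambda,\mu)>0$ with $\mu M\le \tfrac12 E + C'$. Substituting this into the inequality above yields the closed, dissipative estimate
\[
\frac{d}{dt}E \le -\mu\cos\vartheta\, E + 2\mu\cos\vartheta\, C',
\]
again valid almost everywhere, with the correct damping rate $\mu\cos\vartheta$.

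Finally, since $E=F+\mu M$ is absolutely continuous (as $F$ is absolutely continuous and $M\in C^1$), I may integrate this inequality using the integrating factor $e^{\mu\cos\vartheta\, t}$. This gives
\[
E(t)\le 2C' + e^{-\mu\cos\vartheta\, t}\big(E(0)-2C'\big)\le 2C' + e^{-\mu\cos\vartheta\, t}E(0),
\]
so the assertion holds with $K:=2C'$.

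The step I expect to be most delicate is the first one, namely justifying $\frac{d}{dt}E\le\mu\,\frac{d}{dt}M$ without any pointwise control of $\partial_t\psi$. The point is that the \emph{integrated} identity \eqref{eq:energy} already forces $F$ to be absolutely continuous and non-increasing, which is exactly what is needed and sidesteps the regularity issue entirely. The other essential ingredient is the strict sublinearity $\alpha<1$ of the mass--energy bound in Lemma~\ref{lem:glob_bound}; it reflects the combined presence of the confinement $\|x\psi\|_{L^2}$ and the defocusing nonlinearity ($\lambda>0$), and it is precisely this feature that lets the $\mu M$ term be absorbed so that the estimate closes with a constant $K$ independent of the initial datum.
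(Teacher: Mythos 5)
Your proof is correct, and the core estimates coincide with the paper's: the differentiated mass identity \eqref{mass-derivative}, the interpolation bound $M\le C E^{\alpha}$ of Lemma~\ref{lem:glob_bound} with $\alpha=\frac{d\sigma+2}{d\sigma+2\sigma+2}<1$ (your computation of the exponent is right), and Young's inequality to absorb the mass term. Where you genuinely diverge is in how the differential inequality for $E$ is made rigorous. The paper refuses to differentiate the energy relation at all: it combines the monotonicity $E(t)-E(s)\le\mu\bigl(M(t)-M(s)\bigr)$ with the integrated mass inequality to get an integral inequality for $E$, and then runs a weak-formulation argument with bump test functions and the choice $\chi(\tau)=e^{\mu\tau\cos\vartheta}\phi(\tau)$ to extract the Gronwall conclusion for a.e.\ $t$, upgrading to all $t$ by continuity of $E$. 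You instead note that \eqref{eq:energy} exhibits $F$ as $F(0)$ minus the integral of a nonnegative $L^1_{\mathrm{loc}}$ function, so $F$ is absolutely continuous and non-increasing; since $M$ is $C^1$, $E=F+\mu M$ is absolutely continuous, the a.e.\ inequality $E'\le\mu M'$ is legitimate, and the fundamental theorem of calculus for absolutely continuous functions lets you integrate the closed inequality directly. This is precisely a rigorous justification of the formal computation the authors record in the remark following their proof, and it is shorter than their test-function detour; it is also robust, since even with only monotonicity of $F$ (no absolute continuity) the increment of a non-increasing function is bounded above by the integral of its a.e.\ derivative, so the integral inequality $E(t)-E(s)\le\int_s^t\mu M'(\tau)\,d\tau$ would survive. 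The remaining difference — the paper applies Young to $E^{\alpha}$ inside the mass-derivative inequality while you absorb $\mu M$ after passing to $E$ — is cosmetic and changes nothing.
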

\begin{proof}
We first note that Lemma \ref{lem:glob_bound} and the differential inequality \eqref{mass-derivative} imply
\[
\frac{d}{dt}M(t) \le -2\cos \vartheta\, E(t) + C \mu E(t)^{\frac{\sigma\theta + 1}{\sigma+1}}   .
\]
Now, for any $\vartheta \in (-\frac \pi  2 ,\frac \pi  2 )$ and $\tilde\theta=  \frac{\sigma\theta + 1}{\sigma+1}$, by Young's inequality, we obtain
\[
 E(t)^{\tilde\theta} \le \frac{\cos\vartheta}{C\mu} E(t) + (1-\tilde \theta) \left(\frac{ C \mu \tilde \theta}{\cos \vartheta}\right)^{\frac{\tilde \theta} {1-\tilde \theta}} = 
 \frac{\cos \vartheta}{C\mu}  \, E(t)+\tilde C,
\]
where $\tilde C>0$, depends on all the parameters involved, but not on time. Thus, we have
\[
\frac{d}{dt}M(t) \le - \cos\vartheta E(t) + \mu C \tilde C.
\]
On the other hand, identity \eqref{eq:energy} implies 
\[
E(t) -E(t_0) \leq \mu M(t)-\mu M(t_0), \qquad 0 \leq t_0 \leq t,
\]
and hence
\[
E(t)-E (s) \le \int_{s}^t (-  \mu \cos \vartheta \, E(\tau)  + \mu^2 C \tilde C )\, d\tau, \qquad 0 \leq s \leq t,
\]
as well as
\[
E(t)-E (s) \ge \int_{s}^t (-  \mu \cos \vartheta \, E(\tau)  + \mu^2 C \tilde C )\, d\tau, \qquad 0 \leq t \leq s.
\]

Now, given any positive bump function $\chi \in C_0^{\infty}((t-\epsilon,t+\epsilon))$, such that
$\chi' \ge 0$ on $(t-\epsilon, t)$ and $\chi' \le 0$ on $(t,t+\epsilon)$, we multiply by $\chi'(s)$ and integrate in $s$, to obtain
\[
\begin{split}
\int_{t-\epsilon}^{t+\epsilon} [E(t)-E(s)]\chi'(s)\, ds \le& \int_{t-\epsilon}^{t+\epsilon} \int_s^t 
(-  \mu \cos \vartheta \, E(\tau)  + \mu^2 C \tilde C ) \chi'(s) \, d\tau \, ds\\
 =& \int_{t-\epsilon}^{t} \int_{t-\epsilon}^\tau 
(-  \mu \cos \vartheta \, E(\tau)  + \mu^2 C \tilde C ) \chi'(s) \, ds \, d\tau \\
&-\int_{t}^{t+\epsilon} \int_\tau^{t+\epsilon} 
(-  \mu \cos \vartheta \, E(\tau)  + \mu^2 C \tilde C ) \chi'(s) \, ds \, d\tau \\
 =& \int_{t-\epsilon}^{t+\epsilon} 
(-  \mu \cos \vartheta \, E(\tau)  + \mu^2 C \tilde C ) \chi(\tau) \, d\tau.\\
\end{split}
\]
A similar computation gives the same inequality for a negative bump function function $\chi \in C_0^{\infty}((t-\epsilon,t+\epsilon))$, such that
$\chi' \le 0$ on $(t-\epsilon, t)$ and $\chi' \ge 0$ on $(t,t+\epsilon)$. Since an  arbitrary test function can be written as a linear
combination of positive and negative bump functions, we have 
\[
-\int_{t_0}^t E(\tau) \chi'(\tau) \, d \tau \le \int_{t_0}^t \left(-  \mu \cos \vartheta \, E(\tau)  + \mu^2 C \tilde C \right) \, \chi(\tau) \, d\tau,
\]
for any $\chi \in C_0^{\infty}((t_0,t))$. Here, we have also used the fact that $\chi$ has compact support on $(t_0,t)$. 
Choosing $\chi(\tau)= e^{\mu \tau \cos \vartheta } \phi(\tau)$ with $\phi \in C_0^{\infty}((t_0,t))$, we obtain
\[
-\int_{t_0}^t E(\tau) \left(e^{\mu \tau \cos \vartheta } \phi(\tau)\right)' \, d \tau \le \int_{t_0}^t \left(-  \mu \cos \vartheta \, E(\tau)  + \mu^2 C \tilde C \right) \, e^{\mu \tau \cos \vartheta } \phi(\tau) \, d\tau,
\]
and thus 
\[
\begin{split}
-\int_{t_0}^t E(\tau) e^{\mu \tau \cos \vartheta } \phi'(\tau) \, d \tau &\le
\int_{t_0}^t \mu^2 C \tilde C  e^{\mu \tau \cos \vartheta } \phi(\tau) \, d\tau\\
&\le \int_{t_0}^t \frac{\mu^2 C \tilde C}{\mu \cos \vartheta}(1-  e^{\mu \tau \cos \vartheta }) \phi'(\tau) \, d\tau.
\end{split}
\]
Hence
\[
E(t) e^{\mu t \cos \vartheta } + \frac{\mu^2 C \tilde C}{\mu \cos \vartheta}(1-  e^{\mu t \cos \vartheta })
\le E(t_0) e^{\mu t_0 \cos \vartheta } + \frac{\mu^2 C \tilde C}{\mu \cos \vartheta}(1-  e^{\mu t_0 \cos \vartheta }),
\]
for almost all $0\leq t_0 \leq t$. In summary, for almost all $t\ge 0$ we have
\[
E(t) \le E(0) e^{-\mu t \cos \vartheta } +K(1-  e^{-\mu t \cos \vartheta }),
\]
where
\[
K= \frac{\mu C \tilde C}{\cos \vartheta}.
\]
However, since $\psi\in C([0,\infty;\Sigma))$ implies that $E (t)$ is continuous in time, we 
consequently infer the inequality for all $t\ge 0$.\end{proof} 

\begin{remark}
The proof above is slightly complicated due to the fact that we cannot use the energy identity  \eqref{eq:energy} in its differentiated form \eqref{energy-derivative}, see 
the discussion below the proof of Lemma \ref{lem:Lyapunov}. 
If we ignore this problem for the moment, then we have
\[
\frac{d}{dt} E(t) \le\mu\frac{d}{dt}M(t)\le -  \mu \cos \vartheta E(t) + \mu^2 C \tilde C,
\]
which directly allows us to conclude the assertion proved above.
\end{remark}

In view of Lemma \ref{lem:glob_bound} the bound on $E(t)$ obtained above implies a similar bound on $M(t)$. In particular, 
there is some constant $\rho_M >0$ and a function $t_M(\cdot)$,  such that for all $\psi\in C([0,\infty);\Sigma)$ solutions to \eqref{NLS_diss}, it holds
\[
\| \psi (t, \cdot)\|_{L^2} \le \rho_M, \quad \forall t \ge t_M(M(0)).
\]
Therefore $$\{ \psi \in L^2(\R^d): \|\psi\|_{L^2} \le \rho_M \}\subset L^2(\R^d)$$
is an absorbing ball for trajectories $t \mapsto \psi(t, \cdot)$. Similarly, we know, that there exists a 
$\rho_\Sigma\ge \rho_M$ and a function $t_\Sigma(\cdot)$,  such that
\[
\|\psi (t, \cdot)\|_\Sigma \le \rho_\Sigma, \quad \forall t \ge t_\Sigma(\|\psi(0)\|_\Sigma).
\]
In other words, 
\begin{equation}\label{X}
X:=\{ \psi \in \Sigma: \|\psi\|_{\Sigma} \leq  \rho_\Sigma \}
\end{equation}
is an absorbing ball in $\Sigma$ for trajectories $t\mapsto \psi(t, \cdot)$.
In our study of long time dynamics of \eqref{NLS_diss}, the set $X$ will play the role of a {\it phase space}.

\section{The global attractor and its properties} \label{sec:attractor}
In the previous section we proved that solutions $\psi(t)$ exist globally in $\Sigma$, and, moreover, all such solutions
remain within an absorbing ball $X \subset \Sigma$ for $t>0$ large enough. It is therefore natural to ask whether
there exists an $\A \subset \Sigma$ that {\it attracts} all trajectories $t\mapsto \psi(t, \cdot)\in \Sigma$. Unfortunately,
classical theories of global attractors (see, e.g., \cite{CV, Te}) do not apply to our situation as they typically require asymptotic
compactness, which is unknown in $\Sigma$. However, the trajectories might still converge to the global attractor $\A$
in some weaker metric, say $L^2$. To prove this we revisit the rather general framework of evolutionary systems introduced in \cite{C5} and adapt it to our situation.

\subsection{Existence of a global attractor}

First, recall that our phase space is the metric space $(X,\dr_{L^2}(\cdot,\cdot))$ where $X\subset \Sigma$ is given by \eqref{X} and 
$\dr_{L^2}({\psi,\phi}) = \| {\psi - \phi} \|_{L^2}$. We note that $X$ is $\dw$-compact.
In addition, we also have the stronger $\Sigma$-metric $\ds({\psi,\phi}):= \| {\psi-\phi}\|_{\Sigma}$ on $X$, which satisfies: If $\ds(\psi_n, \phi_n) \to 0$ as $n \to \infty$ for some
$\psi_n, \phi_n \in X$, then $\dw(\psi_n, \phi_n) \to 0$ as $n \to \infty$.
Note that any $\Sigma$-compact set is $L^2$-compact, and any $L^2$-closed set is $\Sigma$-closed.

Now, let $C([a, b];X_\bullet)$, where $\bullet = \Sigma$ or $L^2$, be the space of $\db$-continuous $X$-valued
functions on $[a, b]$ endowed with the metric
\[
\dd_{C([a, b];X_\bullet)}(\psi,\phi) := \sup_{t\in[a,b]}\db(\psi(t),\phi(t)). 
\]
Also, let $C([a, \infty);X_\bullet)$ be the space of $\db$-continuous
$X$-valued functions on $[a, \infty)$ endowed with the metric
\[
\dd_{C([a, \infty);X_\bullet)}(\psi,\phi) := \sum_{T\in \mathbb{N}} \frac{1}{2^T} \, \frac{\sup\{\db(\psi(t),\phi(t)):a\leq t\leq a+T\}}
{1+\sup\{\db(\psi(t),\phi(t)):a\leq t\leq a+T\}}.
\]
In order to define a general evolutionary system, we introduce
\[
\mathcal{T} := \{ I: \ I=[T,\infty) \subset \mathbb{R}, \mbox{ or } 
I=(-\infty, \infty) \},
\]
and for each $I \subset \mathcal{T}$, we denote 
the set of all $X$-valued functions on $I$ by $\mathcal X(I)$.

\begin{definition} \label{Dc}
A map $\Ec$ that associates to each $I\in \mathcal{T}$ a subset
$\Ec(I) \subset \mathcal X(I)$ will be called an {\it evolutionary system} if
the following conditions are satisfied:

\begin{itemize}
\item[(i)] $\Ec([0,\infty)) \ne \emptyset$.
\item[(ii)]
$\Ec(I+s)=\{\psi(\cdot): \ \psi(\cdot +s) \in \Ec(I) \}$ for
all $s \in \mathbb{R}$.

\item[(iii)] For all pairs $I_2\subset I_1 \in \mathcal{T}$: $\{\psi(\cdot)|_{I_2} : \psi(\cdot) \in \Ec(I_1)\}
\subset \Ec(I_2)$.

\item[(iv)]
$\Ec((-\infty , \infty)) = \{\psi(\cdot) : \ \psi(\cdot)|_{[T,\infty)}
\in \Ec([T, \infty)) \ \forall T \in \mathbb{R} \}.$
\end{itemize}
In general, $\Ec(I)$ will be referred to as {\it set of trajectories} on the time interval $I$, and trajectories in $\Ec((-\infty,\infty))$ will be called {\it complete}.
\end{definition}

We now consider the specific evolutionary system induced by the family of trajectories of \eqref{NLS_diss}
in $X$. More precisely, we set
\begin{equation}
\begin{split}
\Ec([T,\infty)) := \Big \{  &\,  \psi \in C([T, \infty);X) \ \text{a solution to \eqref{NLS_diss}, with $\vartheta \in \big (-\frac \pi  2 ,\frac \pi  2 \big)$,} \\ 
&\, \text{$\lambda, \mu > 0$,  $\omega > |\Omega| $, and $0 < \sigma < \frac{d}{2(d-2)}$}  \Big \} .
\end{split}
\label{eq:syst}
\end{equation}
Clearly, the properties (i)--(iv) above hold for the evolutionary system associated to \eqref{NLS_diss}. In addition, due to Proposition \ref{prop:global}, for
any $\psi_0 \in X$ there exists $\psi \in \Ec([{T},\infty))$ with $\psi({T})=\psi_0$. Standard techniques then imply the following lemma:
\begin{lemma} \label{l:convergenceofLH}
Let $(\psi_n)_{n\in \N}$ be a sequence of functions, such that $\psi_n \in \Ec([T_1, \infty))$ for all $n\in \N$. Then for any $T_2>T_1$ there exists a sub-sequence $(\psi_{n_j})_{j\in \N}$ which converges in 
$C([T_1, T_2]; X_{L^2})$ to $\psi\in \Ec([T_1, \infty))$.
\end{lemma}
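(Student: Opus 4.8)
The plan is to prove Lemma~\ref{l:convergenceofLH}, a standard compactness-and-diagonalization statement, in three stages: first extract uniform bounds, then pass to a limit via weak-$*$ compactness in $\Sigma$ together with Aubin--Lions type compactness in $L^2$, and finally verify that the limit is itself a solution lying in $\Ec([T_1,\infty))$.

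First I would exploit the fact that every $\psi_n \in \Ec([T_1,\infty))$ takes values in the absorbing ball $X$ from \eqref{X}, so that $\|\psi_n(t)\|_\Sigma \le \rho_\Sigma$ uniformly in $n$ and in $t\in[T_1,T_2]$. Because the imbedding $\Sigma \hookrightarrow L^2(\R^d)$ is \emph{compact} (the quadratic potential confines mass, ruling out escape to spatial infinity, which is precisely the role of the $\|xf\|_{L^2}$ term in the definition of $\Sigma$), the set $X$ is relatively compact in $L^2$. Thus for each fixed $t$ the sequence $(\psi_n(t))_n$ is precompact in $L^2$. To upgrade this to uniform convergence on $[T_1,T_2]$ I would establish equicontinuity in time in the $L^2$ metric: using the mild formulation \eqref{mild_GP} and the semigroup estimates \eqref{kernel_Lp}, \eqref{kernel_Sigma} from Lemma~\ref{lem:kernel}, together with the uniform $\Sigma$-bound controlling the nonlinear term $\lambda|\psi_n|^{2\sigma}\psi_n$ in a suitable $L^q$, one bounds $\|\psi_n(t)-\psi_n(s)\|_{L^2}$ by a modulus of continuity in $|t-s|$ that is uniform in $n$. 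With pointwise precompactness and uniform equicontinuity in hand, the (generalized) Arzel\`a--Ascoli theorem for $X_{L^2}$-valued functions yields a subsequence $(\psi_{n_j})$ converging in $C([T_1,T_2];X_{L^2})$ to some limit $\psi$.

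Next I would confirm that the limit $\psi$ is again a solution of \eqref{NLS_diss}. By the uniform $\Sigma$-bound and weak-$*$ lower semicontinuity of the $\Sigma$-norm, $\psi(t)\in X$ for each $t$, so the limit stays in the phase space. Passing to the limit in the Duhamel identity \eqref{mild_GP} along the subsequence is the delicate point: the linear term $S_\Omega(t)\psi_{n_j}(T_1)$ converges since $\psi_{n_j}(T_1)\to\psi(T_1)$ in $L^2$ and $S_\Omega(t)$ is bounded on $L^2$, while the nonlinear term requires continuity of $u\mapsto |u|^{2\sigma}u$ from bounded sets of $\Sigma$ into $L^q$ under $L^2$ (or $L^p$) convergence. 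Here the uniform $\Sigma$-bound, interpolation (as in the localization estimate \eqref{localize}), and the subcriticality $0<\sigma<\frac{d}{2(d-2)}$ combine to give strong $L^p$-convergence of $\psi_{n_j}$ on $[T_1,T_2]$, from which convergence of the nonlinearity and hence of the Duhamel integral follows by dominated convergence. This identifies $\psi$ as a mild solution, which by Proposition~\ref{prop:loc_ex} is the unique strong solution; by Proposition~\ref{prop:global} it extends globally, so $\psi\in\Ec([T_1,\infty))$.

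I expect the main obstacle to be the time-equicontinuity estimate feeding the Arzel\`a--Ascoli argument, since the semigroup bounds \eqref{kernel_Lp}--\eqref{kernel_Sigma} hold only for small times $0<t<\delta$; I would circumvent this by a short-time iteration, splitting any $[T_1,T_2]$ into finitely many subintervals of length less than $\delta$ and composing the estimates, using the uniform $\Sigma$-bound at each restart point. A secondary technical care is that convergence is only claimed in the weaker $L^2$-metric, so I must be careful never to assert $\Sigma$-convergence; the limit's membership in $X$ comes from weak compactness rather than strong convergence, which is exactly why the evolutionary-systems framework of \cite{C5} is invoked instead of the classical theory requiring asymptotic compactness in $\Sigma$.
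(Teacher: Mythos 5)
Your argument is correct, but it takes a genuinely different and considerably longer route than the paper. The paper's proof is essentially two lines: since $X$ is $\dw$-compact, extract a subsequence so that the initial data $\psi_{n_j}(T_1)$ converge in $L^2$ to some $\tilde\psi$, observe that $\tilde\psi\in X$ by weak lower semicontinuity of the $\Sigma$-norm, solve forward from $\tilde\psi$ using Proposition \ref{prop:global}, and conclude uniform convergence on $[T_1,T_2]$ from continuous dependence on the initial data. In other words, all the compactness is harvested at the single time $t=T_1$ and then propagated by well-posedness, so there is no need for time-equicontinuity, no Arzel\`a--Ascoli, and no passage to the limit in the Duhamel formula. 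Your approach instead builds the limit trajectory directly via pointwise precompactness plus equicontinuity and then re-identifies it as a solution; this is more work but is also more robust, since it does not lean on continuous dependence in the $\dw$-metric --- a point the paper glosses over, given that the contraction estimates of Proposition \ref{prop:loc_ex} are stated in $L^p$ with $p>2$ and in $\Sigma$ rather than in $L^2$ (one has to interpolate against the uniform $\Sigma$-bound on $X$ to transfer $L^2$-closeness of data to $L^p$-closeness, exactly the kind of interpolation you invoke at the limit-passage stage). Your two flagged technical points (iterating the small-time semigroup bounds over a finite partition of $[T_1,T_2]$, and deriving equicontinuity of $(S_\Omega(h)-I)\psi_n(s)$ from the uniform $\Sigma$-bound) are genuine but routine, so I see no gap; just be aware that the paper's shortcut makes the lemma an almost immediate consequence of compactness of $X$ in $L^2$ together with well-posedness.
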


\begin{proof}
Since $X$ is compact in $L^2(\R^d)$, there exists a sequence $(\psi_{n_j})_{j\in \N}$ such that $\psi_{n_j}(T_1) \to \tilde \psi$ for some $\tilde \psi \in L^2(\R^d)$. 
However, since lower-semicontinuity and the definition of $X$ yield
\[
\| \tilde \psi \|_\Sigma \le \liminf_{j \to \infty} \| \psi_{n_j}(T_1, \cdot) \|_\Sigma  \le \rho_\Sigma,
\]
we have that $\tilde \psi \in X$. In view of proposition \ref{prop:global} there exists $\psi \in \Ec([T_1, \infty))$ with $\psi(T_1) = \tilde \psi$. 
Continuous dependence on the initial data, then gives the desired result.
\end{proof}

Using this, we can prove one of the main structural properties of the set of trajectories induced by \eqref{NLS_diss}:
\begin{proposition} \label{prop:compact} $\Ec([0,\infty))$ is a compact set in $C([0,\infty); X_{L^2})$.
\end{proposition}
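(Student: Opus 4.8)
The plan is to show that $\Ec([0,\infty))$ is both relatively compact and closed in $C([0,\infty); X_{L^2})$. Relative compactness is essentially a diagonal-extraction argument built on Lemma~\ref{l:convergenceofLH}. Given an arbitrary sequence $(\psi_n)_{n\in\N} \subset \Ec([0,\infty))$, I would apply Lemma~\ref{l:convergenceofLH} successively on the intervals $[0,1], [0,2], [0,3], \dots$ to extract nested subsequences, each converging in $C([0,T]; X_{L^2})$ to some limit that is itself a trajectory in $\Ec([0,\infty))$. Taking the diagonal subsequence then yields a single subsequence converging on every compact subinterval $[0,T]$; by the definition of the metric $\dd_{C([0,\infty);X_\bullet)}$ (a weighted sum over intervals $[0,T]$, $T\in\N$), convergence on each $[0,T]$ is exactly convergence in $C([0,\infty); X_{L^2})$. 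The consistency of the limits across different $T$ (needed to get a single well-defined limit function) follows because restriction is injective on trajectories and each extracted limit agrees with the previous one on the smaller interval.

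The closedness part is where the genuine content lies, and it is largely already packaged in Lemma~\ref{l:convergenceofLH}. The key observation is that the limit produced by the extraction procedure is again a solution to \eqref{NLS_diss}, i.e.\ a member of $\Ec([0,\infty))$. This is precisely what Lemma~\ref{l:convergenceofLH} guarantees on each finite interval: the $C([0,T];X_{L^2})$-limit $\psi$ lies in $\Ec([T_1,\infty))$ with $\psi(T_1)$ recovered as the $L^2$-limit of the initial data $\psi_{n_j}(T_1)$, which stays in $X$ by lower semicontinuity of the $\Sigma$-norm under $L^2$-convergence. Since the diagonal limit restricts correctly to each finite window and is a trajectory there, property (iv) of Definition~\ref{Dc} (or simply the definition \eqref{eq:syst}) ensures it is a trajectory on $[0,\infty)$.

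I expect the main obstacle to be the bookkeeping of the diagonal argument together with verifying that the various finite-interval limits are mutually consistent, rather than any analytic difficulty; the hard analytic work (compactness of $X$ in $L^2$, passage to the limit in the mild formulation, and continuous dependence on initial data) has already been absorbed into Lemma~\ref{l:convergenceofLH} and Proposition~\ref{prop:global}. One subtle point worth flagging is that compactness is asserted in the $L^2$-metric on trajectories and \emph{not} in the stronger $\Sigma$-metric; the argument must use only $\dw$-convergence of initial data and the $C([0,T];X_{L^2})$-convergence from the lemma, never appealing to asymptotic compactness in $\Sigma$, which the authors explicitly note is unavailable. Thus the proof should read: take a sequence, extract a diagonal subsequence converging in $C([0,\infty);X_{L^2})$ via repeated application of Lemma~\ref{l:convergenceofLH}, identify the limit as an element of $\Ec([0,\infty))$, and conclude both relative compactness and closedness, hence compactness.
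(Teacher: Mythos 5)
Your proposal is correct and follows essentially the same route as the paper: a diagonal extraction over the intervals $[0,T]$, $T\in\N$, using Lemma~\ref{l:convergenceofLH} on each window, checking consistency of the limits on overlaps, and concluding convergence in $C([0,\infty);X_{L^2})$ to a limit that is itself a trajectory. The paper phrases this directly as sequential compactness rather than splitting into relative compactness plus closedness, but the substance is identical.
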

\begin{proof}
First note that $\Ec([0,\infty)) \subset C([0,\infty);X_{L^2})$. Now take any sequence
$(\psi_n)_{n\in \N} \in \Ec([0,\infty))$.
Thanks to Lemma~\ref{l:convergenceofLH}, there exists
a subsequence, still denoted by  $\psi_n$, that converges
to some $\psi^{1} \in \Ec([0,\infty))$ in $C([0, 1];X_{L^2})$ as $n \to \infty$.
Passing to a subsequence and dropping a subindex once more, we obtain that
$\psi_n \to \psi^2$ in $C([0, 2];X_{L^2})$ as $n \to \infty$ for some 
$\psi^{2} \in \Ec([0,\infty))$.
Note that $\psi^1(t)=\psi^2(t)$ on $[0, 1]$.
Continuing and picking a diagonal sequence, we obtain a subsequence $\psi_{n_j}$
of $\psi_n$ that converges
to some $\psi \in \Ec([0,\infty))$ in $C([0, \infty);X_{L^2})$ as $n_j \to \infty$.
\end{proof}

In order to proceed further, we denote, as usual, the set of all subsets of $X$ by $P(X)$.
For every $t \ge 0$, we can then define a map $R(t):P(X) \to P(X)$, by
\[
R(t)A := \{\psi(t): \psi(0) \in A, \ \text{such that} \ \psi \in \Ec([0,\infty))\}, \quad
\text{for any $A \subset X.$}
\]
Note that the assumptions on $\Ec$ imply that $R(s)$ enjoys
the following property:
\begin{equation} \label{eq:propR(T)}
R(t+s)A \subset R(t)R(s)A, \qquad A \subset X,\quad t,s \ge 0.
\end{equation}

\begin{definition}
A set $A$ is called {\it invariant} under the dynamics, if $R(t)A = A$ for all $t\ge 0$.
\end{definition}

We also recall the standard notion of and $\omega$-limit associated to an evolutionary system (see also \cite{Te}).

\begin{definition}The $\wb${\it-limit} ($\bullet= \Sigma, \, L^2$) of a set $A\subset X$ is
\[\wb(A):=\bigcap_{T\ge0}\overline{\bigcup_{t\ge T}R(t)A}^{\bullet}.\]\end{definition}

We also note that an equivalent definition of the $\wb$-limit set is given by
\[
\begin{split}
\wb(A)=\big\{&{\psi}\in X: \mbox{ there exist sequences }
t_n \xrightarrow{n\to \infty}\infty \mbox { and } {\psi_n} \in R(t_n)A,\\
& \mbox{such that } {\psi_n(t_n)}\xrightarrow{n\to \infty}\psi \mbox{ in the } \db\mbox{-metric} \big \}.
\end{split}
\]

Finally, we will give a precise definition of what we mean by an attractor.
\begin{definition}
A set $A \subset X$ is a $\mathrm{d}_{\bullet}${\it -attracting set}, if it uniformly
attracts $X$ in $\mathrm{d}_{\bullet}$-metric, i.e. 
\[
\liminf_{{\phi} \in A}\mathrm{d}_{\bullet}(R(t) X, {\phi}) \xrightarrow{t\to +\infty}0.
\]
A set
$\mathcal{A} \subset X$ is a
$\mathrm{d}_{\bullet}$-{\it global attractor}  if
$\mathcal{A}$ is a minimal $\mathrm{d}_{\bullet}$-closed
$\mathrm{d}_{\bullet}$-attracting  set.
\end{definition}

After these preparations, we are able to prove the main result of this section:
\begin{corollary} \label{thm:Attractor}
The evolutionary system \eqref{eq:syst} possesses a unique $d_{L^2}$-global attractor $\A=\omega_{L^2}(X)$, which has the following structure
\[
\A=\{\psi_0: \psi_0 =\psi(0) \mbox { for some } \psi\in \Ec((-\infty,\infty))\}\\
\]
Furthermore, it holds:
\begin{enumerate}
\item For any $\epsilon > 0$ and $T > 0$, there exists a $t_0\in \R$, such that for any $t^* > t_0$, every trajectory
 $\psi\in\Ec([0,\infty))$ satisfies $\dw(\psi(t), \phi(t)) < \varepsilon$, for all $t \in [t^*, t^* +T ]$, where $\phi \in \Ec ((-\infty, \infty))$ is some complete trajectory, i.e., 
 the uniform tracking property holds.
\item If the $\Sigma$ global attractor exists, then it coincides with $\A$.
\item $\A$ is connected in $L^2$.
\item $\mathcal A$ is the maximal invariant set.
\end{enumerate}
\end{corollary}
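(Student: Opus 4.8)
The plan is to verify that the evolutionary system \eqref{eq:syst} falls within the abstract framework of \cite{C5} and then to quote the corresponding structural theorems, the crucial hypothesis being the compactness of $\Ec([0,\infty))$ established in Proposition \ref{prop:compact}. Throughout, I would work with the two metrics $\dw$ and $\ds$ on $X$, keeping in mind that $\ds$-convergence implies $\dw$-convergence and that $X$ is $\dw$-compact.

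First I would show that $\A := \omega_{L^2}(X)$ is nonempty, invariant, and $\dw$-attracting, and that it is minimal among $\dw$-closed $\dw$-attracting sets, so that it is the $\dw$-global attractor. Nonemptiness and the attracting property follow from the $\dw$-compactness of $X$ and Proposition \ref{prop:compact} (together with property \eqref{eq:propR(T)}); invariance, $R(t)\A=\A$, follows from Proposition \ref{prop:compact} via the usual argument that allows one to ``close up'' an $\omega$-limit into a family of complete trajectories. Concretely, given $\psi_0\in\A$, there exist trajectories $\psi_n\in\Ec([0,\infty))$ and times $t_n\to\infty$ with $\psi_n(t_n)\to\psi_0$ in $\dw$; shifting by $t_n$ and applying Lemma \ref{l:convergenceofLH} repeatedly on the intervals $[-T,T]$, $T\in\N$, together with a diagonal extraction, produces a complete trajectory $\phi\in\Ec((-\infty,\infty))$ with $\phi(0)=\psi_0$. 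This simultaneously yields the characterization $\A=\{\phi(0):\phi\in\Ec((-\infty,\infty))\}$.

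Next I would derive properties (1), (2), and (4). For the uniform tracking property (1), I would argue by contradiction: if it failed, there would be a sequence of trajectories in $\Ec([0,\infty))$ and times $t^*_n\to\infty$ staying $\dw$-distance $\ge\varepsilon$ from every complete trajectory on intervals of length $T$; shifting and invoking the compactness of $\Ec([0,\infty))$ from Proposition \ref{prop:compact} would extract a limit which is itself a complete trajectory performing the tracking, a contradiction. For the coincidence (2), I would use that every $\ds$-attracting set is automatically $\dw$-attracting (since $\ds\to 0$ forces $\dw\to 0$), so that minimality of $\A$ in the $\dw$-metric forces any $\ds$-global attractor to contain $\A$, while the reverse inclusion follows because both sets consist of the time-zero values of the same family of complete trajectories. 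For maximality (4), if $B\subset X$ is invariant then $R(t)B=B$ for all $t$, so each point of $B$ lies on a complete trajectory and hence $B\subset\A$; since $\A$ is itself invariant, it is the maximal invariant set.

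Finally, for connectedness (3), I would use that $X$ is a ball and hence $\dw$-connected, that the solution map $(t,\psi_0)\mapsto\psi(t)$ is $\dw$-continuous on $[T,\infty)\times X$ by the well-posedness of Proposition \ref{prop:global}, and therefore that each $\overline{\bigcup_{t\ge T}R(t)X}^{L^2}$ is a $\dw$-compact connected set; since these sets are nested and decreasing in $T$, their intersection $\A$ is $\dw$-connected. The main obstacle is not in any of these abstract deductions, all of which are routine once the framework is in place, but rather in the compactness input itself: the lack of asymptotic compactness in $\Sigma$ forces one to work in the weaker $L^2$-metric, and it is precisely Proposition \ref{prop:compact}, resting on the asymptotic-compactness Lemma \ref{l:convergenceofLH}, that makes the machinery of \cite{C5} applicable here.
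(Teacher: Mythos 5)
Your proposal is correct and, for the bulk of the corollary, follows the same route as the paper: both rest on the compactness of $\Ec([0,\infty))$ from Proposition \ref{prop:compact} together with Lemma \ref{l:convergenceofLH} and a diagonal extraction to realize points of $\omega_{L^2}(X)$ as values of complete trajectories, both obtain assertions (1) and (2) by invoking the abstract framework of \cite{C5}, and your maximality argument for (4) is essentially verbatim the paper's (invariant sets consist of points on complete trajectories, hence lie in $\A$). The one place where you genuinely diverge is connectedness (3). You use the classical topological argument: each set $\overline{\bigcup_{t\ge T}R(t)X}^{L^2}$ is a $\dw$-compact connected subset of $X$ (as the closure of the continuous image of the connected set $[T,\infty)\times X$), these sets decrease in $T$, and a nested intersection of compact connected sets in a Hausdorff space is connected. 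The paper instead argues by contradiction: assuming $\A\subset U_1\cup U_2$ with $U_1,U_2$ disjoint $\dw$-open sets meeting $\A$, it decomposes $X=X_1\cup X_2$ according to which $U_j$ contains the $\omega_{L^2}$-limit of each point, and uses Lemma \ref{l:convergenceofLH} to show both $X_j$ are $\dw$-open and nonempty, contradicting connectedness of $X$. Both arguments are valid; yours is shorter and leans on a standard point-set fact, while the paper's makes the role of continuous dependence and of the asymptotic compactness lemma more explicit and avoids having to verify joint continuity of $(t,\psi_0)\mapsto\psi(t)$ in the $\dw$-metric, which your route implicitly requires (it does follow from Proposition \ref{prop:loc_ex} and the uniform bounds, but is worth stating).
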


\begin{proof}
Assertion (1) and (2) follow from the results proved in \cite{C5}. To this end, one first shows that the $\omega_{L^2}$-limit of $X$ is an attracting set, which by definition is closed 
and the minimal set satisfying these two properties. Then, using Proposition \ref{prop:compact} and a diagonalization process, one can prove the structural properties of $\mathcal A$, cf. 
\cite[Theorem 5.6]{C5}.
The fact that $\mathcal A$ is connected then follows from Lemma~\ref{l:convergenceofLH} and uniqueness: We argue by contradiction and hence assume 
that $\A$ is not $L^2$-connected. Then there exist disjoint $\dw$-open sets $U_1, U_2 \in X$
such that $\A \subset U_1 \cup U_2$ and $\A \cap \, U_1$, $\A \cap U_2$ are nonempty. Define
\[
X_j= \{ {\psi}\in X: \omega_{L^2}({\psi}) \in U_j\}, \qquad j=1,2.
\] 
Since $U_1$ and $U_2$ are disjoint, we also have that $X_1$, $X_2$ are disjoint. Continuity of trajectories implies that
$X_1 \cup X_2 =X$. Since $\A$ is $\dw$-attracting,  there exists $T> 0$ such that
\[
R(t)X \in U_1 \cup U_2, \qquad \forall t > T.
\]
By continuity of trajectories we have that for each $\psi \in \Ec([0,\infty))$, either $\psi(t) \in U_1$ for all $t >T$, or $\psi(t) \in U_2$ for all $t >T$. 
This implies that both $X_1$ and $X_2$ are nonempty. Moreover, Lemma~\ref{l:convergenceofLH} implies that $X_1$ and
$X_2$ are $\dw$-open. This contradicts the fact that $X$ is $\dw$-connected. 
Finally we note that the structure of $\mathcal A$, together with uniqueness of solutions, imply that $\mathcal A$ is an invariant set. 
Clearly, only complete trajectories are invariant, hence $\mathcal A$ is the maximal invariant set.
\end{proof}

\begin{remark}
In the case of the usual GL equation (posed on bounded domains $D\subset \R^d$) many more details concerning the global attractor are known, see, e.g., \cite{MM, Te, TW}. 
It is an interesting open problem to check which of these results can be extended to our situation and what the main structural differences between \eqref{NLS_diss} and the usual GL equation are.
\end{remark}

\subsection{Dimension of the attractor} \label{sec:dimension} We hereby follow the, by now, classical theory of estimating the Lyapunov numbers associated to $\Ec([0,\infty))$ by studying 
the evolution of an $m$-dimensional volume element of our phase space $X$, cf. \cite[Chapter V]{Te} for a general introduction. Using this technique, 
the case of the usual GL equation on bounded domains $D\subset \R^n$, 
with $n=1,2$ is studied, e.g., in \cite[Chapter VI, Section 7]{Te}. In our case, the same idea works, but requires several adaptions on a technical level. 

To this end, we first rewrite \eqref{NLS_diss} as
\[
\partial_t \psi = - e^{-i \vartheta } G(\psi),\quad \psi_{\mid t =0} = \psi_0.
\]
and, for any $\psi_0 \in \mathcal A$, consider the linearization around a given orbit $\psi(t) = R(t) \psi_0$, i.e.,
\begin{equation}\label{eq:linear}
\partial_t \phi = - e^{-i \vartheta } G'(\psi) \phi, \quad \psi_{\mid t =0} = \xi.
\end{equation}
Here, $\xi \in X$ and $G'$ denotes the Frechet derivative
\[
G'(\psi) \phi = H_{\Omega} \phi  - \mu \phi + \lambda \left(|{\psi}|^{2\sigma} \phi + \sigma \psi |\psi|^{2\sigma -2} \text{Re}\, (\overline \psi \phi) \right),
\]
where $H_\Omega$ is the linear Hamiltonian (with rotation) defined 
in \eqref{H}. It is easy to see, that the linearized equation \eqref{eq:linear} admits a unique strong solution for any given $\xi \in X$ and $\psi\in \mathcal A$. 
We now consider $\phi_1(t), \dots, \phi_m(t)$ solutions to \eqref{eq:linear}, corresponding to initial data $\xi_1, \dots, \xi_m$, $m\in \N$, and choose an 
$L^2$-orthonormal basis $\chi_1(t), \dots, \chi_m(t)$ of 
\[
P_m(t) X:= \text{span}\{ \phi_1(t), \dots, \phi_m(t)\},
\]
where $P_m$ denotes the corresponding orthogonal projection. Then, it is easy to see (cf. \cite{Te}), that the evolution of the $m$-dimensional volume element  in $X$ is given by
\[
|\phi_1(t)\wedge \dots \wedge \phi_m(t)| = | \xi_1 \wedge \dots \wedge \xi_m| \exp \left(- \int_0^t  \text{Re} \, \text{Tr}\,  e^{-i \vartheta }  G'(\psi(s)) \circ P_m(s) \, ds \right).
\]
In order to proceed, we first note that:

\begin{lemma}\label{lem:collective} Let $H_0$ be given by \eqref{H_0}. Then, for any orthonormal family $\{ \chi_j\}_{j=1}^m \subset L^2(\R^d)$ there exists a constant $c=c(\omega, d)>0$, such that
\[
\sum_{j=1}^m \langle H_0 \chi_j , \chi_j \rangle_{L^2} \ge  c \, m^{1+1/d}.
\]
\end{lemma}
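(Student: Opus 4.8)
The plan is to reduce the lemma to a lower bound for the combined kinetic and confining energy of the one-particle density
\[
\rho(x) := \sum_{j=1}^m |\chi_j(x)|^2,
\]
and then to minimize that energy over all densities of the fixed mass $\int_{\R^d}\rho = m$ (the value of the mass being forced by orthonormality). Using $H_0=\frac12(-\Delta+\omega^2|x|^2)$, I would first expand
\[
\sum_{j=1}^m \langle H_0 \chi_j,\chi_j\rangle_{L^2} = \frac12 \sum_{j=1}^m \|\nabla\chi_j\|_{L^2}^2 + \frac{\omega^2}{2}\int_{\R^d} |x|^2 \rho(x)\,dx,
\]
and invoke the Lieb-Thirring kinetic-energy inequality for orthonormal families: there is a constant $K_d>0$ with
\[
\sum_{j=1}^m \|\nabla\chi_j\|_{L^2}^2 \ge K_d \int_{\R^d} \rho(x)^{1+2/d}\,dx.
\]
This is precisely the collective Sobolev inequality underlying the dimension estimates in \cite{Te}, from which I would cite it. Hence, with $a=K_d/2$ and $b=\omega^2/2$, the left-hand side is bounded below by $aA + bB$, where $A:=\int_{\R^d}\rho^{1+2/d}$ and $B:=\int_{\R^d}|x|^2\rho$.

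The second step is the purely variational estimate: for every $\rho\ge0$ with $\int_{\R^d}\rho=m$ one has $aA+bB \ge c\,m^{1+1/d}$. The mechanism is a Hölder split at a cutoff radius $R>0$. Writing $m=\int_{|x|\le R}\rho+\int_{|x|>R}\rho$, Hölder with conjugate exponents $\frac{d+2}{d}$ and $\frac{d+2}{2}$ on the ball of volume $|B_R|\sim R^d$ gives $\int_{|x|\le R}\rho \le |B_R|^{2/(d+2)} A^{d/(d+2)}$, while Chebyshev gives $\int_{|x|>R}\rho \le R^{-2}B$. Thus
\[
m \le C_1 R^{2d/(d+2)} A^{d/(d+2)} + R^{-2} B .
\]
Optimizing the right-hand side over $R$ (a one-variable minimization of $c_1R^{2d/(d+2)}+c_2R^{-2}$) yields $m \le C_2\,(AB)^{d/(2(d+1))}$, that is $AB \ge c_0\, m^{2(d+1)/d}$. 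Applying AM-GM in the form $aA+bB\ge 2\sqrt{ab\,AB}$ then produces $aA+bB \ge 2\sqrt{ab\,c_0}\;m^{(d+1)/d}=c\,m^{1+1/d}$, with $c=c(\omega,d)>0$ since $a$ depends only on $d$ and $b=\omega^2/2$.

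The main obstacle is the kinetic-energy step. The elementary single-function uncertainty principle would only yield a bound linear in $m$; the genuine gain to the Weyl-type exponent $m^{1+1/d}$ uses the orthonormality of the whole family and is available exactly through the Lieb-Thirring inequality above. Everything else is interpolation together with a scalar optimization. As a consistency check, this exponent matches the spectral asymptotics $E_{0,n}\sim \omega\,n^{1/d}$ recorded in Lemma \ref{lem:H}, whose first $m$ eigenvalues sum to order $m^{1+1/d}$, so the estimate is of the correct (sharp) order in $m$.
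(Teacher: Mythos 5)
Your argument is correct, but it takes a genuinely different route from the paper's. The paper exploits the fact that the spectrum of $H_0$ is explicitly known (Lemma \ref{lem:H}): counted with multiplicity the eigenvalues satisfy $E_{0,m}\sim m^{1/d}$, and the abstract trace inequality of \cite[Chapter VI, Lemma 2.1]{Te} --- essentially the min--max principle, bounding $\sum_{j=1}^m\langle H_0\chi_j,\chi_j\rangle$ from below by the sum of the first $m$ eigenvalues --- immediately gives the order $m^{1+1/d}$. That is a two-line proof once the spectral data are in hand. Your proof replaces this spectral input by the Lieb--Thirring kinetic-energy inequality for orthonormal families, combined with a H\"older/Chebyshev split of the density $\rho$ and a one-variable optimization; all of these steps check out (the optimization in $R$ does yield $AB\ge c_0\,m^{2(d+1)/d}$, and AM--GM finishes, with the degenerate cases $A=0$ or $B=0$ forcing $m=0$). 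What your approach buys is robustness: it uses only that the potential grows quadratically, not the explicit diagonalization of $H_0$, so it would survive perturbations of $V$ for which no exact spectrum is available. What it costs is the reliance on Lieb--Thirring, a substantially deeper tool than the min--max principle --- and one the authors explicitly note avoiding in the remark following Proposition \ref{prop:dimension}. Since $H_0$ is the exactly solvable harmonic oscillator here, the paper's route is the more economical one, but yours is a valid and more general alternative.
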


\begin{proof}
Having in mind the form and the degeneracy of the eigenvalues stated Lemma \ref{lem:H}, one checks that when counted with multiplicity $E_{0,m}\sim  m^{1/d}$, as $m\to \infty$.
The desired result then follows directly from \cite[Chapter VI, Lemma 2.1]{Te}. \end{proof}

Using this, we can prove the following result for the dimension of $\mathcal A$:

\begin{proposition}\label{prop:dimension}
Consider the dynamical system \eqref{eq:syst} with $\sigma \ge \frac{2}{d}$, and let $m\in \mathbb{N}$ be defined by
\[
m-1 < \left( \frac{\kappa_2 }{\kappa_1} \right)^{d/(d+1)}\le m,
\]
where
\[
\kappa_1 = \frac{\gamma c}{4}\Big(1- \frac{\Omega^2}{\omega^2}\Big), \quad \kappa_2 =  c' \gamma \mu^{1+d} \Big(1- \frac{\Omega^2}{\omega^2}\Big)^{-d}+
\frac{c''( \lambda |\beta| )^{1+\alpha} }{\gamma^\alpha}\Big(1- \frac{\Omega^2}{\omega^2}\Big)^{-\alpha} \delta,
\]
with $c, c', c'', \alpha, \tilde \alpha$ positive constants depending only on $\omega, d, \sigma$, and
\[
\delta = \limsup_{t\to \infty} \sup_{\psi_0 \in \mathcal A} \left( \frac{1}{t} \int_0^t   \| R(s) \psi_0\|^{2\sigma \tilde \alpha}_{L^{2\sigma +2}}  \, ds \right) \le \left(K\frac{\sigma+1}{\lambda}\right)^{\frac{2\sigma}{2\sigma+2-d\sigma}}.
\]
Here, $K$ is the constant from Proposition~\ref{prop:Ebound}.

Then, as $t\to +\infty$, the $m$-dimensional volume element in $X$ is exponentially decaying.
Moreover, the fractal (and hence Hausdorff) dimension of $\mathcal A$ is less than or equal to $m$.
\end{proposition}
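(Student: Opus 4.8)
The plan is to follow the classical volume-contraction method of Constantin--Foias--Temam, as presented in \cite[Chapter V]{Te}: once we show that the time-averaged real part of the trace of $e^{-i\vartheta}G'(\psi)\circ P_m$ is \emph{uniformly positive} over $\mathcal A$ for the stated value of $m$, the identity for the $m$-dimensional volume element displayed above shows that this element decays exponentially as $t\to\infty$, and the abstract dimension theorem then yields $\dim_{\mathrm H}(\mathcal A)\le m$. The fractal bound follows from the same volume-contraction estimate through the Constantin--Foias--Temam inequality, the factor of two reflecting the passage from a contracting complex $m$-frame to the underlying real structure of $L^2(\R^d;\C)$.

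The technical heart is the pointwise lower bound
\[
\mathrm{Re}\,\mathrm{Tr}\big(e^{-i\vartheta}G'(\psi)\circ P_m(s)\big)\ \ge\ \kappa_1\, m^{1+1/d}\ -\ \kappa_2(s),
\]
where $\kappa_2(s)$ depends on $\psi(s)$ only through $\|\psi(s)\|_{L^{2\sigma+2}}$. To obtain it I would evaluate the trace against the $L^2$-orthonormal family $\chi_1(s),\dots,\chi_m(s)$ and split $G'(\psi)=H_\Omega-\mu+\lambda(\,\cdot\,)$. Since $H_\Omega$ is self-adjoint and $\mathrm{Re}\,e^{-i\vartheta}=\gamma>0$, the linear part contributes $\gamma\sum_j\langle H_\Omega\chi_j,\chi_j\rangle-\gamma\mu m$. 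Arguing exactly as in Lemma~\ref{lem:Ebound} (combining the two interpolation bounds there) gives a reduction of the form $\langle H_\Omega u,u\rangle\ge c_0(1-\Omega^2/\omega^2)\langle H_0 u,u\rangle$, after which Lemma~\ref{lem:collective} bounds $\sum_j\langle H_0\chi_j,\chi_j\rangle$ below by $c\,m^{1+1/d}$. This produces the leading term $\kappa_1 m^{1+1/d}$ with $\kappa_1=\tfrac{\gamma c}{4}(1-\Omega^2/\omega^2)$.

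The two remaining contributions must be absorbed into $\kappa_1 m^{1+1/d}$ at the cost of additive constants. The term $\gamma\mu m$ is linear in $m$, so Young's inequality with exponents $d+1$ and $(d+1)/d$ bounds it by $\tfrac12\kappa_1 m^{1+1/d}+c'\gamma\mu^{1+d}(1-\Omega^2/\omega^2)^{-d}$, which is precisely the first summand of $\kappa_2$ (the power $(1-\Omega^2/\omega^2)^{-d}$ emerging from the scaling $\kappa_1\sim\gamma(1-\Omega^2/\omega^2)$). For the nonlinearity, the nonnegative diagonal term $\lambda\gamma\sum_j\langle|\psi|^{2\sigma}\chi_j,\chi_j\rangle$ may simply be discarded; the dangerous piece is the sign-indefinite cross term carrying the factor $\mathrm{Im}\,e^{-i\vartheta}=-\beta$. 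I would estimate $\big|\sum_j\int\psi|\psi|^{2\sigma-2}\mathrm{Re}(\overline\psi\chi_j)\overline\chi_j\big|$ by a collective, Lieb--Thirring-type inequality, bounding it by a small multiple of $\sum_j\langle H_0\chi_j,\chi_j\rangle$ (absorbed once more into the leading term) plus a constant of the form $c''(\lambda|\beta|)^{1+\alpha}\gamma^{-\alpha}(1-\Omega^2/\omega^2)^{-\alpha}\|\psi\|_{L^{2\sigma+2}}^{2\sigma\tilde\alpha}$, the second summand of $\kappa_2$. Taking $\limsup_{t\to\infty}\tfrac1t\int_0^t$ uniformly over $\psi_0\in\mathcal A$ then replaces $\|\psi\|_{L^{2\sigma+2}}^{2\sigma\tilde\alpha}$ by $\delta$, whose explicit bound follows from Proposition~\ref{prop:Ebound}.

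Finally, the hypothesis $(2\kappa_2/\kappa_1)^{d/(d+1)}\le m$ gives $\kappa_1 m^{1+1/d}-\kappa_2\ge\kappa_2>0$, so the averaged real trace is strictly positive; this is exactly the uniform negative-Lyapunov-sum condition required to conclude exponential contraction of the $m$-dimensional volume and hence the stated dimension bounds. I expect the main obstacle to be the collective control of the nonlinear term: one needs a Lieb--Thirring-type estimate adapted to the harmonic-oscillator operator $H_0$ (rather than the free Laplacian), so that the gradient and confinement contributions it generates can genuinely be absorbed into $\sum_j\langle H_0\chi_j,\chi_j\rangle$ with an arbitrarily small constant, while tracking the correct exponents $\alpha,\tilde\alpha$ and the correct power of $(1-\Omega^2/\omega^2)$.
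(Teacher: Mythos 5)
Your proposal follows the paper's overall strategy faithfully: the Constantin--Foias--Temam volume-contraction framework from \cite[Chapter V--VI]{Te}, the reduction of $\langle H_\Omega u,u\rangle$ to $(1-\Omega^2/\omega^2)\langle H_0 u,u\rangle$ via the interpolation argument of Lemma~\ref{lem:Ebound}, the collective lower bound $\sum_j\langle H_0\chi_j,\chi_j\rangle\ge c\,m^{1+1/d}$ from Lemma~\ref{lem:collective}, the absorption of $\gamma\mu m$ by Young's inequality with exponents $d+1$ and $(d+1)/d$, and the identification of $\delta$ via Proposition~\ref{prop:Ebound}. The one genuine divergence is your treatment of the nonlinear term, which you single out as the main obstacle and propose to handle with a Lieb--Thirring-type collective inequality adapted to $H_0$. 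The paper deliberately avoids this (and says so in the remark following the proof): it estimates each diagonal entry separately, writing $\int|\psi|^{2\sigma}|\chi_j|^2\le\|\psi\|_{L^{2\sigma+2}}^{2\sigma}\|\chi_j\|_{L^{2\sigma+2}}^{2}$ by H\"older, then $\|\chi_j\|_{L^{2\sigma+2}}^{2}\lesssim\|\nabla\chi_j\|_{L^2}^{d\sigma/(\sigma+1)}$ by Gagliardo--Nirenberg (using $\|\chi_j\|_{L^2}=1$), and finally Young's inequality with a small parameter $\varepsilon$ to split off $\varepsilon\langle H_0\chi_j,\chi_j\rangle$, which is absorbed into the leading term; this is where the exponents $\alpha=\frac{d\sigma}{2\sigma+2-d\sigma}$ and $\tilde\alpha=\frac{2\sigma+2}{2\sigma+2-d\sigma}$ arise. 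The paper's route is more elementary, since no Lieb--Thirring inequality for the harmonic oscillator needs to be established; your route, if such an inequality were proved, would be sharper in its dependence on $m$ (a collective bound on $\sum_j|\chi_j|^2$ avoids summing $m$ identical constants), but it front-loads exactly the auxiliary estimate whose difficulty you correctly anticipate and which the paper's argument renders unnecessary. Aside from this, your sketch of how the Temam dimension lemma converts the bound $q_j\le-\kappa_1 j^{1+1/d}+\kappa_2$ into the Hausdorff and fractal dimension bounds matches the paper.
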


\begin{proof}
Having in mind the representation formula for the $m$-dimensional volume element as given above, we introduce
\[
q_m:= \limsup_{t\to \infty} \sup_{ \| \xi_j \|_{L^2} \le 1} \left(- \frac{1}{t} \int_0^t  \text{Re} \, \text{Tr}\,  e^{-i \vartheta }  G'(\psi(s)) \circ P_m(s) \, ds \right).
\]
and quote the following result from \cite[Chapter III, Corollary 4.2]{CV}: If there are constants $\kappa_{1,2}\ge 0$, such that
\[
q_j \le - \kappa_1 j^{\theta} + \kappa_2, \quad \forall j\geq 1,
\]
then the fractal dimension of $\mathcal A $ enjoys the following bound:
\[
\mathrm{d}_F(\mathcal A)\leq \left( \frac{\kappa_2 }{\kappa_1} \right)^{1/\theta}.
\]
In order to obtain the required estimate on $q_j$, we first note that
\[
\text{Re}\,  \text{Tr} \, e^{-i \vartheta } G'(\psi(t)) \circ P_m(t) = \sum_{j=1}^m \text{Re}\, \langle e^{-i \vartheta } G'(\psi(t)) \chi_j(t), \chi_j(t)\rangle_{L^2}.
\]
Next, we recall that $e^{-i \vartheta } = \gamma + i \beta$, with $\gamma >0$, and compute (suppressing all the $t$-dependence for a moment)
\begin{align*}
&\, - \text{Re}\, \langle e^{-i \vartheta } G'(\psi) \chi_j, \chi_j\rangle_{L^2} =  - \frac{\gamma}{2}\left( \| \nabla \chi_j \|^2_{L^2} + \omega^2 
\| x \chi_j \|_{L^2}^2\right) + \gamma \Omega \int_{\R^d} \overline {\chi_j}L\chi_j \, dx + \gamma \mu \\
&\ - \lambda \gamma \int_{\R^d} |\psi|^{2\sigma} |\chi_j|^2 \, dx + \sigma \lambda  \int_{\R^d}  |\psi|^{2\sigma -2} 
\text{Re}\, (\overline \psi \chi_j)\big( \beta \text{Im}\, (\psi \overline{\chi_j})- \gamma \text{Re}\, (\psi \overline {\chi_j}) \big)\, dx,
\end{align*}
where we have also used the fact that $\| \chi(t) \|_{L^2}=1$. Next, we estimate the term {proportional to} $\Omega$ as we did in the proof of Lemma \ref{lem:Ebound} and we also 
use that fact that 
\[
\int_{\R^d} \beta \text{Re}\, (\overline \psi \chi_j) \beta \text{Im}\, (\psi \overline{\chi_j})\,dx \le |\beta| \int_{\R^d} |\psi|^2 |\chi_j|^2\, dx.
\]
In summary, this yields
\begin{align*}
 - \text{Re}\, \langle e^{-i \vartheta } G'(\psi) \chi_j, \chi_j\rangle_{L^2} \le &\, - \frac{\gamma}{2}\Big(1- \frac{\Omega^2}{\omega^2}\Big)\left( \| \nabla \chi_j \|^2_{L^2} + \omega^2 \| x \chi_j \|_{L^2}^2\right) 
 + \gamma \mu \\
&\ - \lambda (\gamma- \sigma |\beta| ) \int_{\R^d} |\psi|^{2\sigma} |\chi_j|^2 \, dx .
\end{align*}
Thus,
\begin{equation}\label{mest1}
\begin{split}
- \sum_{j=1}^m \text{Re}\, \langle e^{-i \vartheta } G'(\psi) \chi_j, \chi_j\rangle_{L^2} \le &\, - \gamma \Big(1- \frac{\Omega^2}{\omega^2}\Big)\sum_{j=1}^m \langle H_0 \chi_j , \chi_j \rangle_{L^2}   + \gamma \mu m   \\
&\,  + \sigma \lambda |\beta| \sum_{j=1}^m \int_{\R^d} |\psi|^{2\sigma} |\chi_j|^2 \, dx,
\end{split}
\end{equation}
in view of definition \eqref{H_0}. To further estimate the right hand side of \eqref{mest1}, we use H\"older's inequality and Gagliardo-Nirenberg to obtain
\[
\sum_{j=1}^m \int_{\R^d} |\psi|^{2\sigma} |\chi_j|^2 \, dx \le \| \psi\|^{2\sigma}_{L^{2\sigma +2}}
\left(\int_{\mathbb{R}^d}\left( \sum_{j=1}^m  |\chi_j|^2\right)^{\sigma+1}\right)^{\frac{1}{\sigma+1}}.
\]
We will use the generalized Sobolev-Lieb-Thirring inequality (see \cite{GMT}) that reads 
\[
\left(\int_{\mathbb{R}^d} \left( \sum_{j=1}^m |\chi_j|^2\right)^{\frac{p}{p-1}} \, dx
\right)^{\frac{2(p-1)}{d}}
\le c_1 \sum_{j=1}^m \int_{\mathbb{R}^d} |\nabla \chi_j|^2 \, dx,
\]
provided $\max\{1,d/2\} <p\leq 1+d/2$. Here $c_1=c_1(d,p)>0$ some absolute constant. Choosing $p=1+\frac{1}{\sigma}$ (which requires $\sigma\geq 2/d$), we obtain
\[
\left(\int_{\mathbb{R}^d}\left( \sum_{j=1}^m  |\chi_j|^2\right)^{\sigma+1}\right)^{\frac{1}{\sigma+1}} \lesssim
\left(\sum_{j=1}^m \|\nabla \chi_j\|_{L^2}^2\right)^{\frac{d\sigma}{2(\sigma+1)}}.
\]
 Young's inequality then implies that for any $\varepsilon >0$, there exists a $c_2 = c_2(d, \sigma)>0$, such that
\begin{align*}
 \| \psi\|^{2\sigma}_{L^{2\sigma +2}} \left(\sum_{j=1}^m \|\nabla \chi_j\|_{L^2}^2\right)^{\frac{d\sigma}{2(\sigma+1)}} \le &\, \frac{c_2}{\varepsilon^{\alpha}}  \| \psi\|^{2\sigma \tilde \alpha}_{L^{2\sigma +2}} + \varepsilon \sum_{j=1}^m \|\nabla \chi_j\|_{L^2}^2 \\
 \le & \, \frac{c_2}{\varepsilon^{\alpha}}  \| \psi\|^{2\sigma \tilde \alpha}_{L^{2\sigma +2}} + \varepsilon \sum_{j=1}^m\langle H_0 \chi_j , \chi_j \rangle_{L^2},
 \end{align*}
 where $\alpha = \frac{d\sigma}{2\sigma +2-d\sigma}$, and $\tilde \alpha = \frac{2\sigma+2}{2\sigma +2-d\sigma}$. Note that both of these exponents are positive for $\sigma <\frac{d}{2(d-2)}$.
Thus, we an appropriate choice of $\varepsilon$, we obtain from \eqref{mest1}, that
\begin{equation*}\label{mest2}
\begin{split}
- \sum_{j=1}^m \text{Re}\, \langle e^{-i \vartheta } G'(\psi) \chi_j, \chi_j\rangle_{L^2} \le &\, - \frac{\gamma}{2} \Big(1- \frac{\Omega^2}{\omega^2}\Big)\sum_{j=1}^m \langle H_0 \chi_j , \chi_j \rangle_{L^2}   + \gamma \mu m   \\
&\,  + \frac{c_3( \lambda |\beta| )^{1+\alpha} }{\gamma^\alpha}\Big(1- \frac{\Omega^2}{\omega^2}\Big)^{-\alpha}  \| \psi\|^{2\sigma \tilde \alpha}_{L^{2\sigma +2}}.
\end{split}
\end{equation*}
Now, using the estimate from Lemma \ref{lem:collective} above, we have
\begin{equation*}\label{mest3}
\begin{split}
- \sum_{j=1}^m \text{Re}\, \langle e^{-i \vartheta } G'(\psi) \chi_j, \chi_j\rangle_{L^2} \le &\, - \frac{\gamma c}{2} \Big(1- \frac{\Omega^2}{\omega^2}\Big)m^{1+1/d}  + \gamma \mu m   \\
&\,  + \frac{c_3( \lambda |\beta| )^{1+\alpha} }{\gamma^\alpha}\Big(1- \frac{\Omega^2}{\omega^2}\Big)^{-\alpha}  \| \psi\|^{2\sigma \tilde \alpha}_{L^{2\sigma +2}}.
\end{split}
\end{equation*}
This can be estimated further by
\[
- \sum_{j=1}^m \text{Re}\, \langle e^{-i \vartheta } G'(\psi(t)) \chi_j, \chi_j\rangle_{L^2} \le - \kappa_1 m^{1+1/d}  + \rho(t) ,
\]
where $\kappa_1$ is as defined above and
\[
\rho(t) =  c_4 \gamma \mu^{1+d} \Big(1- \frac{\Omega^2}{\omega^2}\Big)^{-d}+
\frac{c_3( \lambda |\beta| )^{1+\alpha} }{\gamma^\alpha}\Big(1- \frac{\Omega^2}{\omega^2}\Big)^{-\alpha}  \| \psi (t)\|^{2\sigma \tilde \alpha}_{L^{2\sigma +2}}, 
\]
with $c_4=c_4(\omega, d)>0$. 

Now, for $\psi(t) = R(t)\psi_0 \in \mathcal A$, we have that
\[
\delta = \limsup_{t\to \infty} \sup_{\psi_0 \in \mathcal A} \left( \frac{1}{t} \int_0^t   \| R(s) \psi_0\|^{2\sigma \tilde \alpha}_{L^{2\sigma +2}}  \, ds \right) <\infty,
\]
due to Lemma \ref{lem:Ebound} and Proposition \ref{prop:Ebound}, which imply that for $\psi(t)\in \mathcal A$:
\[
\| \psi(t) \|^{2\sigma \tilde \alpha}_{L^{2\sigma +2}} \lesssim \| \psi(t) \|^{2\sigma \tilde \alpha}_{\Sigma} \lesssim \rho_\Sigma ^{2\sigma \tilde \alpha}.
\]
This consequently yields
\[
q_m \le - \kappa_1 m^{1+1/d} + \kappa_2, \quad \text{ for all $m\ge 1$,}
\] 
which finishes the proof.
\end{proof}

\begin{remark}
In comparison to many 
classical results on the dimensions of global attractors (e.g., \cite{Te}), the proof above requires the use of the {\it generalized} Lieb-Thirring type inequality to control the 
term proportional to $ \lambda$, see \cite{GMT} for more details. 
\end{remark}

We expect that a similar analysis can be done to estimate the box dimension of the attractor, cf. \cite{CV} for more details.
We finally note that a careful analysis of all the involved constants in $\kappa_1, \kappa_2$ shows that for a given, fixed $\omega>0$, 
the fraction
\[
\left( \frac{ \kappa_2 }{\kappa_1} \right) \to +\infty, \quad \text{as $|\Omega| \to \omega$.}
\]
The estimate on the dimension of $\mathcal A$ thus becomes larger the larger the rotation speed.

\appendix

\section{Derivation of the kernel of the linear semi-group}

Our starting point for justifying the formula~\eqref{SG_diss} for the kernel of the linear semigroup, is the following linear Schr\"odinger equation
\[
 i \partial_t u = H_0 u, \, \quad u_{\mid t =0}  = u_0(x).
\]
where, as before, $H_0 =  \frac{1}{2} (-\Delta + |x|^2)$.
For this equation, Mehler's formula yields an explicit representation of (the kernel of) the associated semi-group \cite{Car09}. 
More precisely,
\[
u(t,x) = \int_{\R^d} S_0(t,x,y) u_0(y) dy, 
\]
where 
\begin{align}\label{SG_LS}
 S_0(t,x,y) = (2 \pi i g_0(t))^{-\frac{d}{2}} \exp\bigg(\frac{i}{g_0(t)}\bigg(\frac{h_0(t)}{2}(x^2+y^2) - x\cdot y\bigg)\bigg),
\end{align}
and 
\[g_0(t) = \frac{\sin(\omega t)}{\omega}, \quad  h_0(t) = \cos(\omega t).\]
It was shown in \cite{AMS}, that a simple change of variables allows to obtain an analogous formula for solutions of
 the linear Schr\"odinger equation with non-vanishing rotation. 
To this end, we write
\[
 x_1 = \cos(\Omega t)  \tilde x_1 + \sin(\Omega t) \tilde x_2,\quad  x_2 = \cos(\Omega t) \tilde x_2 - \sin(\Omega t)
\tilde x_1,
\]
and $x_3$ (if applicable) is left unchanged. Note that this transformation is volume-preserving and hence does not affect the pre-factor $(2 \pi i g_0(t))^{-d/2}$ 
which ensures that 
\[
\iint_ {\R^d\times \R^d} S_0(t,x,y)\, dx \, dy =1 ,\quad \text {for all $t>0$ .}
\]
Defining the new unknown $\tilde u(t,x) = u(t,\tilde x)$ a straightforward calculation shows that $\tilde u$ solves
\begin{equation}\label{LS_rot}
 i \partial_t \tilde u= H_0 \tilde u- \Omega L \tilde u, \quad \tilde u_{\mid t =0} = u_0(x),
\end{equation}
Substituting the new coordinates into Mehler's formula \eqref{SG_LS} yields 
\begin{multline*}
 \tilde S_\Omega(t,x,y) = (2 \pi i g_0(t))^{-\frac{d}{2}} \exp\Bigg(\frac{i}{g_0(t)}\bigg(\frac{h_0(t)}{2}(x^2+y^2) - 
 (\cos(\Omega t) x_1 + \sin(\Omega t) x_2)y_1\\ - (\cos(\Omega t) x_2 - \sin(\Omega t) x_1)y_2 \bigg) \Bigg).
\end{multline*}
Here we drop the term $x_3 y_3$ for notational convenience, since it is unchanged by the change of coordinates.
In order to finally obtain $S_\Omega(t,x,y)$, i.e., the kernel for the dissipative semi-group
$
S_\Omega(t)
$
associated to \eqref{NLS_diss} we replace $t \mapsto -ie^{-i\vartheta} t$ in the above kernel. In other words,
\[
S_\Omega(t,x,y) =  \tilde S_\Omega\left(-ie^{-i\vartheta} t,x,y\right),
\]
which, after some algebra, yields \eqref{SG_diss}. In there, 
the pre-factor in front of the exponent is understood in terms of the principal value of the complex logarithm 
via $(a+ib)^\gamma = e^{\gamma \log(a+i b)}$ and is differentiable for small enough $t>0$. 

\smallskip

Next, in order to study the regularizing properties of $S_\Omega(t)$ for short times, we first note that the phase function $\Phi$ in \eqref{SG_diss} can be 
decomposed into its real and imaginary part, denoted by $\Phi=\Phi_1+i \Phi_2$. 
For Gaussian, i.e.\ heat kernel type, regularity properties of the semi-group $S_\Omega(t)$ for small $t>0$,
we require (at least) an inverse quadratic decay of the real part $\Phi_1$. 
To this end, let
\[a=t\omega\cos\vartheta, \quad b=t\omega\sin\vartheta, \quad \varpi=\frac{\Omega}{\omega}.\]
With this choice of notation we have
\begin{align*}
 \frac{1}{\omega}\Phi_1 = &\, \re \bigg(\frac{\cosh(a+ib)}{\sinh(a+ib)}\bigg) \frac{(x^2+y^2)}{2} - \re \bigg( \frac{\cosh(\varpi (a+i b))}{\sinh(a+ib)} \bigg) (x_1y_1 + x_2y_2)\\ 
&\,  + \im \bigg( \frac{\sinh(\varpi (a+i b))}{\sinh(a+ib)} \bigg) (x_1y_2 - x_2y_1)
\end{align*}
Let us investigate the behavior of the real part $\Phi_1$ of the exponent near $t\approx 0$. Standard trigonometric identities yield
\begin{multline}\label{coeff_x1y1}
 \re \bigg( \frac{\cosh(\varpi (a+i b))}{\sinh(a+ib)} \bigg)\\ = \frac{\cosh(\varpi a) \cos(\varpi b) \sinh(a) \cos(b) + \sinh(\varpi a)
\sin(\varpi b) \cosh(a) \sin(b)}{\sin^2(b) + \sinh^2(a)},
\end{multline}
and
\begin{multline}\label{coeff_x1y2}
 \im \bigg( \frac{\sinh(\varpi (a+i b))}{\sinh(a+ib)} \bigg)\\ = \frac{\cosh(\varpi a) \sin(\varpi b) \sinh(a) \cos(b) - \sinh(\varpi a)
\cos(\varpi b) \cosh(a) \sin(b)}{\sin^2(b) + \sinh^2(a)},
\end{multline}
for all $a,b,\varpi\in\R$. In case $\varpi = 1$, the identity $2\sinh(a) \cosh(a) = \sinh(2a)$ yields
\begin{equation}\label{coeff_x^2+y^2}
 \re \bigg(\frac{\cosh(a+ib)}{\sinh(a+ib)}\bigg) = \frac{\sinh(2a)}{2\sin(b)^2  + 2\sinh(a)^2}.
\end{equation}
Now, we can Taylor expand the expressions \eqref{coeff_x1y1}, \eqref{coeff_x1y2} and \eqref{coeff_x^2+y^2} around $t = 0$. 
Recalling $a=t\omega\cos\vartheta$, $b=-t\omega\sin\vartheta$, and $\varpi=\frac{\Omega}{\omega}$, the denominator of all three terms~\eqref{coeff_x1y1}--\eqref{coeff_x^2+y^2} equals
\[
 \sin^2(b) + \sinh^2(a) = \sin^2(t\omega\sin\vartheta) + \sinh^2(t\omega\cos\vartheta)
\]
Straight-forward expansion yields
\[
 \sin^2(t\omega\sin\vartheta) + \sinh^2(t\omega\cos\vartheta) = \omega^2 t^2 + {\textstyle\frac{1}{3}} \omega^4 t^4 \cos(2\vartheta) + O(t^6),
\]
and thus
\[
 \frac{1}{\sin^2(t\omega\sin\vartheta) + \sinh^2(t\omega\cos\vartheta)} = \frac{1}{\omega^2 t^2} \big( 1 - {\textstyle\frac{1}{3}} \omega^2 t^2
\cos(2\vartheta) \big) + O(t^2).
\]
Here and throughout the constant in the $O(\cdot)$-notation only depend on $\vartheta$, $\omega$, and $\Omega$. The numerator of~\eqref{coeff_x^2+y^2} satisfies
\[
 \sinh(-2t\omega\cos\vartheta) = -2t\omega\cos\vartheta - {\textstyle\frac{4}{3}} t^3 \omega^3 \cos^3\vartheta + O(t^5).
\]
On the other hand, the terms in the numerator of \eqref{coeff_x1y1} satisfy
\begin{align*} &\,\cosh(t \Omega \cos\vartheta)\cos(t \Omega \sin\vartheta)\sinh(t\omega\cos\vartheta)\cos(t\omega\sin\vartheta)\\
 &\, =t\omega\cos\vartheta + {\textstyle\frac{1}{2}} t^3 \omega \cos\vartheta \big( \Omega^2\cos^2\vartheta - (\omega^2+\Omega^2) \sin^2\vartheta
\big) + {\textstyle\frac{1}{6}} t^3 \omega^3 \cos^3\vartheta + O(t^5)
\end{align*}
and 
\begin{align*} &\,  \cosh(t\Omega\cos\vartheta)\cos(t\Omega\sin\vartheta)\sinh(t\omega\cos\vartheta)\cos(t\omega\sin\vartheta)\\
&\, =t^3 \omega \Omega^2 \cos\vartheta \sin^2\vartheta + O(t^5).
\end{align*}
Hence, their sum equals
\[
 t\omega\cos\vartheta + {\textstyle\frac{1}{6}} t^3 \omega^3 \cos\vartheta \big( 3\Omega^2 + 2\omega^2 \cos(2\vartheta) -
\omega^2 \big) + O(t^5).
\]
Likewise we can easily check
\begin{align*}
&\cosh(t\Omega\cos\vartheta)\sin(t\Omega\sin\vartheta)\sinh(t\omega\cos\vartheta)\cos(t\omega\sin\vartheta)\\
 &=t^2\omega\Omega\sin\vartheta\cos\vartheta + O(t^4)\\ &=\sinh(t\Omega\cos\vartheta)\cos(t\Omega\sin\vartheta)\cosh(t\omega\cos\vartheta)\sin(t\omega\sin\vartheta)
\end{align*}
and hence the numerator of~\eqref{coeff_x1y2} vanishes up to fourth order. Collecting all the expansions so far, we obtain
\begin{align*}
 \Phi_1(t,x,y) &= \frac{\omega}{t^2 \omega^2} (1 - {\textstyle\frac{1}{3}} t^2 \omega^2 \cos(2\vartheta))
\bigg({\textstyle\frac{1}{4}}(x^2+y^2)\big(-2t\omega\cos\vartheta - {\textstyle\frac{4}{3}} t^3 \omega^3 \cos^3\vartheta\big)\\
 &\qquad + 2(x_1y_1 + x_2y_2) \big(t\omega\cos\vartheta + {\textstyle\frac{1}{6}} t^3 \omega^3 \cos\vartheta ( 3\Omega^2 + 2\omega^2 \cos(2\vartheta) -
\omega^2 )\big)\\&\qquad\qquad\qquad\qquad\qquad\qquad\qquad  + O((x^2+y^2)t^4) \bigg)\\
 &= -\frac{\cos\vartheta}{2 t} \bigg((x^2+y^2)\big(1 + {\textstyle\frac{1}{3}} t^2 \omega^2\big) +
2(x_1y_1 + x_2y_2) \big(- 1 + {\textstyle\frac{1}{6}} t^2 (\omega^2 - 3\Omega^2)
\big)\\&\qquad\qquad\qquad  + O((x^2 + y^2)t^3) \bigg).
\end{align*}
This expression can be further simplified by collecting coefficients of $x-y$ and $x+y$ to obtain
\begin{multline*}
 \Phi_1(t,x,y) = -\frac{\cos\vartheta}{2 t} \bigg((x-y)^2\big(2 + {\textstyle\frac{1}{6}} (\omega^2+3\Omega^2)t^2\big) +\\
(x+y)^2 {\textstyle\frac{1}{2}} (\omega^2 - \Omega^2) t^2 + O((x^2+y^2)t^3) \bigg).
\end{multline*}
Note that $\cos(\vartheta) > 0$ if $\vartheta \in (-\frac{\pi}{2},\frac{\pi}{2})$ and we assume that $\omega>\Omega\ge0$. The term $O((x^2+y^2)t^3)$ can thus be absorbed in the other coefficients for small enough $t$. In particular, if $t<\delta$ for some small enough $\delta>0$, the real part satisfies \[
\Phi_1(t,x,y) \le -\frac{|x-y|^2}{ct}, \quad c>0.\] 
Hence the semi-group has the same decay as the heat kernel and indeed satisfies~\eqref{kernel_Lp}. Since the exponent $F$ is quadratic in $x$ and $y$, the derivative of 
$S$ w.r.t. $x$ yields only an extra linear factor. In summary, we find that for small $t>0$, the absolute value of the kernel is bounded by
\begin{multline*}
 |S_\Omega(t,x,y)| \le (t + O(t^2))^{-\frac{d}{2}} \exp \Bigg( \frac{\cos\vartheta}{2 t} \bigg((x-y)^2\big(2 + {\textstyle\frac{1}{6}} (\omega^2+3\Omega^2)t^2\big)\\
 + (x+y)^2 {\textstyle\frac{1}{2}} (\omega^2 - \Omega^2) t^2 - x_3 y_3 + O((x^2+y^2)t^3)  \bigg)\Bigg).
\end{multline*}

\end{document}